\documentclass[a4paper,12pt]{article}
\usepackage[utf8]{inputenc}  
\usepackage[T2A]{fontenc}
\usepackage[russian, english]{babel}
\usepackage{graphicx}
\usepackage{listings}
\usepackage[centertags]{amsmath}
\usepackage{amsfonts}
\usepackage{amssymb}
\usepackage{amsthm}
\usepackage{cite}
\usepackage{lastpage}
\usepackage{euscript}
\usepackage{tcolorbox}

\usepackage{hyperref}
\hypersetup{colorlinks=true, linkcolor=blue}

\usepackage{listings}
\usepackage{xcolor} 

\definecolor{codegreen}{rgb}{0,0.6,0}
\definecolor{codegray}{rgb}{0.5,0.5,0.5}
\definecolor{codepurple}{rgb}{0.58,0,0.82}
\definecolor{backcolour}{rgb}{0.95,0.95,0.92}

\numberwithin{equation}{section}
\usepackage{newlfont}
\usepackage{geometry} 
\theoremstyle{plain}
\newtheorem{theorem}{Theorem}[section]
\newtheorem{corollary}{Corollary}[section]
\newtheorem{remark}{Remark}[section]
\newtheorem{proposition}{Proposition}[section]
\newtheorem{lemma}{Lemma}[section]
\theoremstyle{definition}

\usepackage{authblk}

\title{A proof of the Avkhadiev-Wirths conjecture on  Brezis-Marcus constants }

\author[1]{I.\,I.~Gabdulkhalikov \thanks{ilyasmechmat@gmail.com}}
\affil[1]{\small{N.I. Lobachevsky Institute of Mathematics and Mechanics, Kazan Federal University, 35, Kremlievskaia~Str., Kazan, 420008, Russia}}

\author[1,2]{R.\,G.~Nasibullin\thanks{NasibullinRamil@gmail.com}}
\affil[2]{\small{Institute of Software Development and Engineering, Innopolis University, 1, Universitetskaya~Str.,
Innopolis, 420500, Russia}
}
\date{}

\begin{document}

\maketitle

\begin{abstract}
In this paper we deal with geometrical versions of Hardy type inequalities with additional positive terms in convex domains.  The constant $\lambda(\Omega)$ multiplying the additional term depends on the geometry of the multidimensional domain $\Omega$ and the numerical parameters of the problem. The constant (functional) $\lambda(\Omega)$ is called Brezis-Marcus constant. In 2010, F.G. Avkhadiev and K.-J. Wirths  proposed the hypothesis that among all $n$-dimensional domains with given inradius the maximum of the best Brezis–Marcus constant is achieved for the $n$-dimensional ball of radius $\delta_0$.  Using one dimensional Hardy type inequalities we proved the Avkhadiev-Wirths conjecture on  Brezis-Marcus constants in the cases $n=2$ and $n\geq 4$. The sharp constants are solutions of the equation in terms of special functions and fixed  eigenvalues of the Sturm–Liouville differential operators. The corresponding eigenfunctions in the $2$-d case are spheroidal wave functions and for dimensions greater than or equal to $4$ are confluent Heun functions. New properties of the Heun functions are established and their zeros are found. We provide Python code for calculating sharp constants. 

\emph{Key words:} Avkhadiev-Wirths conjecture, Brezis-Marcus constant, spheroidal wave function, confluent Heun function, Hardy inequality, additional term. 

\textit{MSC Classification}: 26D10, 26D15, 26A46

\end{abstract}

\rightline{\textit{We work "near" black holes.}\footnote{We were pleasantly surprised and excited to come  from ``usual Hardy inequalities'' on ``usual balls'' to differential Heun equations related to black holes.}}

\newpage
\renewcommand\contentsname{
\begin{center}
Contents
\end{center}
}
\tableofcontents

\section{Introduction}
This paper is a continuation of a series of works \cite{BFT, AW_Ball, Nas_MS, Ga_Nas} devoted to Hardy type inequalities with additional terms on $n$-dimensional balls. One-dimensional and multidimensional Hardy inequalities with additional terms are well known (see, for example, \cite{Avkh_21, BEL, Maz, Nas_Rew} and references therein). A description of all the results on Hardy-type inequalities with additional terms would take many pages and time.   We simply want to mention the authors who pioneered this area. Research on geometrical versions of Hardy inequalities with remainders has been significantly shaped by the work of F.G.~Avkhadiev  and \mbox{K.-J. Wirths} \cite{AW_Zamm}, H. Brezis and M.~Marcus \cite{BM},  G.~Barbatis, S.~Filippas and A. Tertikas \cite{BFT}, E.B. Davies \cite{Dav, Dav1}, S.~Filippas,  V.~Maz'ya and A.~Tertikas \cite{FMT}, M.~Hoffmann-Ostenhof, T.~Hoffmann-Ostenhof and A.~Laptev \cite{HoHoL}, V.I.~ Levin~\cite{Levin}.   
  
The history of such results goes back to the 1920s when G.H.~Hardy obtained his  one-dimensional inequality \cite{Ha}. Nowadays, Hardy type inequalities arise in different areas of mathematics and mathematical physics, in particular, in the geometric theory of functions and in the theory of isoperimetric inequalities (see, for example, \cite{Avkh_2022, Avkh_98, Nas_2023, AyPeYiAy}).

Let $\Omega$ be a convex domain in the Euclidean space $\mathbb{R}^n$ of dimension $n\geq 2$. In \cite{AW_Zamm}, F.G.~Avkha\-diev and K.-J. Wirths  considered the following functional for $n$-dimensional convex domains 

\begin{equation}\label{intr_f1}
\lambda(\Omega) = \inf_{g\in C_0^1(\Omega)}\left(\int\limits_{\Omega} |\nabla g(x)|^2dx - \frac{1}{4}\int\limits_{\Omega} \frac{|g(x)|^2}{\delta(x)^2}dx \right)\Big/\int\limits_{\Omega} |g(x)|^2dx,
\end{equation}
where the distance function $\delta:\Omega\to (0,\infty)$ from a point $x$ to the boundary $\partial\Omega$ of the  domain $\Omega$ is defined by
$$
\delta(x)= \textrm{dist}(x,\partial\Omega) =\inf_{y\in\partial\Omega}\textrm{dist}(x,y), \quad x\in\Omega,
$$
and $C^1_0 (\Omega)$ is the closure of the family  of smooth functions $f:\Omega \to \mathbb{R}$ with finite Dirichlet integral and supported in $\Omega$.  See \cite{Avkh_Dis_Fu} and  \cite{BEL} where more interesting information on the distance function and its properties may be found.   Here, as usual, by  $\nabla g$ we denote the gradient of   $g$, i.e.
$$
\nabla g(x) = \left(\frac{\partial g(x)}{\partial x_1}, \ldots, \frac{\partial g(x)}{\partial x_n}\right).
$$

The functional $\lambda(\Omega)$ depends on $\Omega$  and is connected with the first eigenvalue $\lambda_1(\Omega)$ of the Laplacian under the Dirichlet boundary condition (see \cite{BEL, Nas_Rew, AW_Zamm}).  For recent studies on estimates for the first eigenvalue of the Laplacian and $p$-Laplacian see, for example, \cite{BrMaz, BobTan, BobTan1}.  One of the most famous related results is Rayleigh-Faber-Krahn isoperimetric inequality \cite{Ban} and it says that for any convex domain   
$$
\lambda_1(\Omega)\geq \frac{\omega_n^{2/n}}{|\Omega|^{2/n}}j_{n/2-1}^2,
$$
where  $\omega_n$ is the volume of the unit ball in $\mathbb{R}^n$, $|\Omega|$ is the volume of the domain $\Omega$ and $j_\nu$ is the first zero of the Bessel function of order $\nu$:
\begin{equation*}
J_\nu(t) = \sum_{k=0}^\infty\frac{(-1)^kt^{2k+\nu}}{2^{2k+\nu}k!\Gamma(k+1+\nu)}, \quad t\in[0,1].
\end{equation*}

The Hersch result \cite{Hersch} implies   that if  a convex domain $\Omega\subset\mathbb{R}^2$ then the following inequality  in terms of the inner radius
$$
\lambda_1(\Omega)\geq \frac{\pi^2}{4}\frac{1}{\delta_0^2(\Omega)}
$$
is valid, where $\delta_0=\delta_0(\Omega)=\sup_{x\in\Omega} \textrm{dist}(x,\partial\Omega)$. Moreover, this sharp result is valid for any $n$-dimensional convex domain \cite{PaSta}. 
In addition, it is known that 
$$
\lambda_1(B_{n}) = \frac{j_{\frac{n}{2}-1}^2}{\delta_0^2(\Omega)},
$$
where $B_{n}$  is an $n$-ball of radius $\delta_0$ (see \cite{Ban}).

The functional $\lambda(\Omega)$ is called Brezis-Marcus constant and it depends on the geometry of the multidimensional domain $\Omega$ and the numerical parameters of the problem.  The problem~(\ref{intr_f1}) might be considered  from various points of view. If we study $\lambda(\Omega)$ for convex domains with finite diameter and  volume, or  with finite inner radius, we will have completely different variational problems \cite{FMT, HoHoL, Nas_Rew, BM, AW_Zamm}.  For example, the first way is to find the best possible $\lambda$ for any convex domain with a finite inner radius.  F.G.~Avkhadiev and K.-J. Wirths \cite{AW_Zamm} established the following  bilateral estimates
\begin{equation*}
  \frac{\lambda_0^2}{\delta_0^2(\Omega)} \leq \lambda(\Omega)\leq \frac{j_{n/2-1}^2-1/4}{\delta_0^2(\Omega)},
\end{equation*}
in terms of the inner radius $\delta_0$ of $\Omega$, where $\lambda_0=0.940\ldots$ is the Lamb constant defined as the first zero in $(0,+\infty)$ of the function 
\begin{equation*}
J_0(z) - 2zJ_1(z).
\end{equation*}

In \cite{AW_Zamm}  it is shown that for any convex domain with finite inner radius $\lambda(\Omega) = \lambda_0^2/\delta_0^2(\Omega)$. Extremal domains for which $\lambda(\Omega) = \lambda_0^2/\delta_0^2(\Omega)$ are linear transformations of the convex domain
$$
(0, 1) \times\mathbb{R}^{n-1}\subset \mathbb{R}^n.
$$

The sharp constants for convex domains with finite diameter and volume are unknown. Notice that the corresponding results by H. Brezis and  M. Marcus from\cite{BM}, and results by M.~ Hoffmann-Ostenhof, T.~Hoffmann-Ostenhof and A. Laptev from \cite{HoHoL}, respectively, imply 
$$
\lambda(\Omega) \geq \frac{1}{4 D^2(\Omega)},\quad \quad  \lambda(\Omega)\geq \frac{1}{4}\frac{K(n)}{|\Omega|^{2/n}},
$$
where $D(\Omega)$ is the  diameter and $|\Omega|$ is the volume of the area of $\Omega$,  $K$ is some positive universal constant. These results have been generalized in various ways in \cite{Nas_2023}.

The second way  is to fix the diameter, the volume or the inner radius of considered convex domains. In \cite{AW_Ball}, F.G. Avkhadiev and K.-J. Wirths  proposed the hypothesis that \textit{among all $n$-dimensional domains with given inradius $\delta_0$  the maximum of the best Brezis–Marcus constants $\lambda(\Omega)$
is presented by $\lambda(B_n)$}, where $B_n $ is an $n$-dimensional ball of radius $\delta_0$:
\begin{equation*}
B_n=\{x\in\mathbb{R}^n:|x-x_0|<\delta_0\}, \quad x_0\in\mathbb{R}^n, \quad \delta_0>0.  
\end{equation*}

The conjecture for $n=1$ is proved by F.G. Avkhadiev and K.-J. Wirths \cite{AW_Ball} and for $n=3$ is confirmed by G. Barbatis, S. Filippas and A. Tertikas \cite{BFT}. It is known that if $n = 1$ and $n = 3$, then, respectively,
\begin{equation*}
\lambda(\Omega) =\frac{\lambda_0^2}{\delta_0(\Omega)}\quad\text{ and  }\quad \lambda(\Omega)= \frac{j_0^2}{\delta_0(\Omega)}.
\end{equation*}

The following inequality 
\begin{equation}\label{intt_f1}
\int\limits_{B_n}|\nabla g(x)|^2dx \geq \frac{1}{4}\int\limits_{B_n}\frac{g^2(x)}{(\rho-|x-x_0|)^2}+\frac{c(n)}{\rho^2}\int\limits_{B_n}g^2(x)dx
\end{equation}
for any function $g:B_n \to \mathbb{R}$   from the closure of the family $C^1_
0 (B_n)$ of smooth functions $g:B_n \to \mathbb{R}$ with
finite Dirichlet integral and supported in $B_n$, is expected to prove this conjecture.   Since $c(n)$ does not depend on linear transformations, it suffices to consider the case $x_0 = 0$ and $\rho = 1$ (see \cite{AW_Ball} for more information).

In this paper, we study the best possible constant $c(n)$ in the Brezis–Marcus inequality~(\ref{intt_f1}). If $n\in \mathbb{N}\backslash\{1,3\}$, then the conjecture remains open and only two-sided estimates are obtained (see \cite{AW_Ball, BFT}). Namely,
\begin{equation*}
2\leq c(2)\leq j^2_{0}-\frac{1}{4},
\end{equation*}
\begin{equation*}
j_0^2+\frac{(n-1)(n-3)}{4} \leq c(n) \leq  j^2_{\frac{n}{2}-1}-\frac{1}{4},\quad \text{if} \quad n\geq 4.
\end{equation*}
We should add  the lower estimates for $c(n)$, $n \geq 4$, were for the first time  proved  in \cite{BFT} in 2003. They later  were reproved in 2012 in \cite{AW_Ball} by different types of arguments. In addition, \cite{AW_Ball} contains some  supplementary  material. 

To prove these estimates in the papers \cite{BFT, AW_Ball, Nas_MS, Ga_Nas} different and interesting methods and approaches are used. G. Barbatis, S. Filippas and A. Tertikas  combined a vector field approach  along with ideas of \cite{HoHoL}. The “mean distance” method of Davies ( \cite{Dav, Dav1}) played an essential role. In  \cite{AW_Ball}, F.G. Avkhadiev and K.-J. Wirths used one-dimensional inequalities. Namely, they for any continuously differentiable function $f:[0,1]\to\mathbb{R}$ such that $f(0)=0$ proved that the following inequalities  hold
\begin{equation*}
\int\limits_0^{1}f'^2(t)(1-t)dt\geq \frac{1}{4}\int\limits_0^{1}\frac{f^2(t)}{t^2}(1-t)dt+2\int\limits_0^{1}f^2(t)(1-t)dt,
\end{equation*}
\begin{equation*}
\int\limits_0^{1}f'^2(t)(1-t)^2dt\geq\frac{1}{4}\int\limits_0^{1}\frac{f^2(t)}{t^2}(1-t)^2dt+ j_0^2\int\limits_0^{1}f^2(t)(1-t)^2dt
\end{equation*}
and if $n\geq 4$, then 
\begin{multline*}
\int\limits_0^{1}f'^2(t)(1-t)^{n-1}dt\geq\frac{1}{4}\int\limits_0^{1}\frac{f^2(t)}{t^2}(1-t)^{n-1} dt
+\left(j_0^2+\frac{(n-1)(n-3)}{4}\right)\int\limits_0^{1}f^2(t)(1-t)^{n-1}dt.
\end{multline*}

Using one-dimensional inequalities, the lower estimates of $c(n)$, when $n=2,4, 5, \ldots, 10$, were improved  by R.G. Nasibullin in \cite{Nas_MS} (see, in addition, \cite{Ga_Nas}).  He succeeded in choosing (guessing) the form of the special functions
and numerical parameters involved. In proofs he estimated weight functions in terms of piece-wise convex functions. That is, continuous functions that are convex on each sub-interval of the corresponding mesh. As a result, the constants are obtained as minima of convex functions. R.G. Nasibullin showed that, for example,
\begin{equation*}
c(2)\geq  2.44382 \quad \text{and} \quad c(4) \geq  6.92837.
\end{equation*}

Also we refer to paper \cite{Ba24} by G.~Barbatis, in which the author  concentrates on other results where the best Hardy constant is either computed exactly or estimated from below.

In this paper, we give a proof of the Avkhadiev-Wirths conjecture on  Brezis-Marcus constants in the case $n\in\mathbb{N}\setminus\{1,3\}$. To achieve our goal, we establish new sharp one dimensional inequities of Hardy type. The sharp constants are solutions of the equation in terms of special functions and fixed eigenvalues of  Sturm–Liouville differential operators. The corresponding eigenfunctions in the $2$-d case are spheroidal wave functions \cite{Fl} and for dimensions greater than or equal to $4$ are confluent Heun functions \cite{Ronveaux}. In  Table \ref{tab:my_label} below\footnote{The cases $n=1$ and $n=3$ is from \cite{BFT, AW_Ball}. We included them solely to provide complete context.}, we compare the results from the mentioned papers.

\begin{table}[h]
    \centering
    \begin{tabular}{ccccc}
     \hline
      $n$   & Results from \cite{Nas_MS} & Results from \cite{Ga_Nas}& Sharp constants & Upper bounds   \\  \hline
      $1$    &     --     &    --   & 0.88504925399\ldots &$2.2174011$   \\
      $2$    & $2.44382$  & 2.952   & 2.95271973692\ldots &$5.53319$  \\
      $3$    &    --      &  --     & 5.78318596294\ldots &$9.61960$   \\
      $4$    & $6.92837$  &  8      & 9.31512370489\ldots &$14.432$   \\
      $5$    & $9.39764$  &  9.5513 & 13.5148747256\ldots &$19.9407$  \\
      $6$    & $11.9419$  &  13.2274& 18.36050676851\ldots &$26.1246$  \\
      $7$    & $14.5342$  &  17.3126& 23.83631772301\ldots &$32.9675$ \\
      $8$    & $17.1324$  &  21.8592& 29.93037875461\ldots&$40.4565$  \\
      $9$    & $19.7308$  &  26.8856& 36.63324667872\ldots&$48.5812$  \\
      $10$   & $22.3292$  &  32.4   & 43.93721660080\ldots&$57.3329$  \\
       \hline
    \end{tabular}
    \caption{Comparison of known  lower estimates of $c(n)$ with sharp constants}
    \label{tab:my_label}
\end{table}

Our proof of the Avkhadiev-Wirths conjecture contains several main steps. First, we will show that a proof of multidimensional inequality of Hardy type is based on one-dimensional weighed inequalities. In the process of proving inequalities, differential equations of seconds order arise. Solutions of these differential equations must satisfy proper conditions.

Second, we use spheroidal wave and Heun differential equations of second order and their properties to prove the one-dimensional inequalities. Our technique fixes the corresponding eigenvalues of the Sturm-Liouville equations. 

Third, we establish sharpness of the constants. As usual for Hardy type inequalities, there are no extremal functions at which equality is attained.  We construct minimizing sequences. These sequences depend on  spheroidal wave functions or on confluent  Heun functions.  

That is why this paper is organized as follows. In section \ref{sec1}, we provide  brief information on spheroidal and Heun functions and their properties. There, we consider corresponding differential equations as eigenvalues and eigenfunction problem.  Section \ref{sec2} is devoted to the spheroidal and Heun functions in the particular cases. Different forms of the equations are considered. Some preliminary statements are also established. In the last section, we deal with one dimensional inequalities of Hardy type. We obtain  sharp inequalities when  the dimension $n$ is equal to $2$ and $n\geq 4$.  These sharp one-dimension inequalities prove the Avkhadiev-Wirths conjecture on Brezis-Marcus constants. We provide Python code that  computes the value of sharp constants with any given precision (see Appendices A and B).  Our bilateral estimates of the sharp constants from \cite{Ga_Nas} were instrumental in applying numerical methods to compute the roots of equations expressed in terms of Heun functions.  In Python cod we use high precision ($40$~digits) to calculate constants, and Table ~\ref{tab:my_label8} from Appendix C shows the constants calculated with this parameter. For larger $n$, we recommend increasing this parameter to improve the stability of our algorithm. The paper contains supporting illustrations and graphics.

The main statements of the paper and the tools used to prove them lay in the intersection of different areas of mathematics and mathematical physics.  We believe that our results might be useful in future research, especially in theory of Hardy type inequalities, isoperimetric inequalities, and in some areas of physics.

\section{On spheroidal and Heun functions}\label{sec1}

\subsection{The angular spheroidal functions and their properties}\label{sec21}
In this section we will consider necessary information on angular spheroidal functions and their properties. For more details, we refer to the following books \cite{Fl, KPS_Book} and the article \cite{Rh}.

Consider  the second order Sturm–Liouville differential operator $L$ defined by
$$
Ly: = \frac{d}{dz}\left((1-z^2)\frac{dy(z)}{dz}\right) +\left(\gamma^2(1-z^2)-\frac{m^2}{1-z^2}\right)y(z),
$$
where $\gamma$ and $m$ are numbers. Spheroidal wave functions are a class of special functions that satisfy the differential equation 
$$
Ly(z) = \lambda_\nu(\gamma, m) y(z)
$$
with $z\in(-1,1)$ and  the following boundary conditions
$$
|y(-1)|<\infty\quad\text{and}\quad |y(1)|<\infty.
$$
The eigenvalue problem can be written as
\begin{equation}\label{StLiEq}
(1-z^2)y''(z)-2z y'(z) +\left(-\lambda_\nu(\gamma, m)+\gamma^2(1-z^2)-\frac{m^2}{1-z^2}\right)y(z)=0.
\end{equation}
Equation (\ref{StLiEq}) has two regular singularities at $-1$ and $1$, and an irregular singularity at $\infty$.

Denote the normalizable solution of equation (\ref{StLiEq}) by $S_{m,\nu}^1(\gamma,z)$. We will choose the numbering of the functions $S_{m,\nu}^1(\gamma,z)$ in such a way that they have $\nu-m$ zeros on the interval $(-1, 1)$, so that $\nu- m\geq 0$. The such choice of numbering gives an increased sequence of $\lambda_\nu(\gamma, m)$ and, in addition, $\lambda_\nu(\gamma, m)\to \infty$ as $\nu \to 0$. The parameter $\nu$ enumerates the spheroidal eigenvalues in such a manner that 
$
\lim_{\gamma\to 0} \lambda_\nu(\gamma, m) = \nu(\nu+1)
\quad
 \text{and} 
\quad
\lim_{z\to 0}S_{m,\nu}^1(\gamma,z/\gamma)= j_\nu(z),
$
where $j_\nu(z)$ is the spherical Bessel function of the first kind. 

In the sequel we need the following properties of spheroidal wave functions $S_{m,\nu}^1(\gamma,z)$ (see, for example, \cite{KPS_Book, Rh}):

\textit{Property 1.} Sturm–Liouville equation (\ref{StLiEq}) possess an infinite set of discrete eigenvalues  $\lambda_\nu(\gamma, m)$ and corresponding, non-zero orthogonal eigenfunctions $S_{m,\nu}^1(\gamma,z)$.

\textit{Property 2.} The equation $S_{m,\nu}^1(\gamma,z)=0$  has  $\nu-m$ roots on the interval $(-1,1)$.  Moreover, $$(1-z^2)^{-\frac{m}{2}}S_{m,\nu}^1(\gamma,z)$$ is an entire function. If $m=0$ then  $S_{0,\nu}^1(\gamma,z)$ has no roots in $[-1,1]$. See for example Fig. \ref{Fig1}

\textit{Property 3.} The eigenvalue $\lambda_\nu(\gamma, m)$ is an increasing function with respect to $\nu$. It means
$$
\lambda_0(\gamma, m)<\lambda_1(\gamma, m)<\lambda_2(\gamma, m)<\ldots 
$$
and 
$$
\min_\nu \lambda_\nu(\gamma, m) = \lambda_0(\gamma, m).
$$
Moreover,
$$
\lambda_\nu(\gamma, m)\to \infty \quad \text{as} \quad \nu \to \infty.
$$

\begin{figure}[h]
\begin{center}
\includegraphics[scale=0.6]{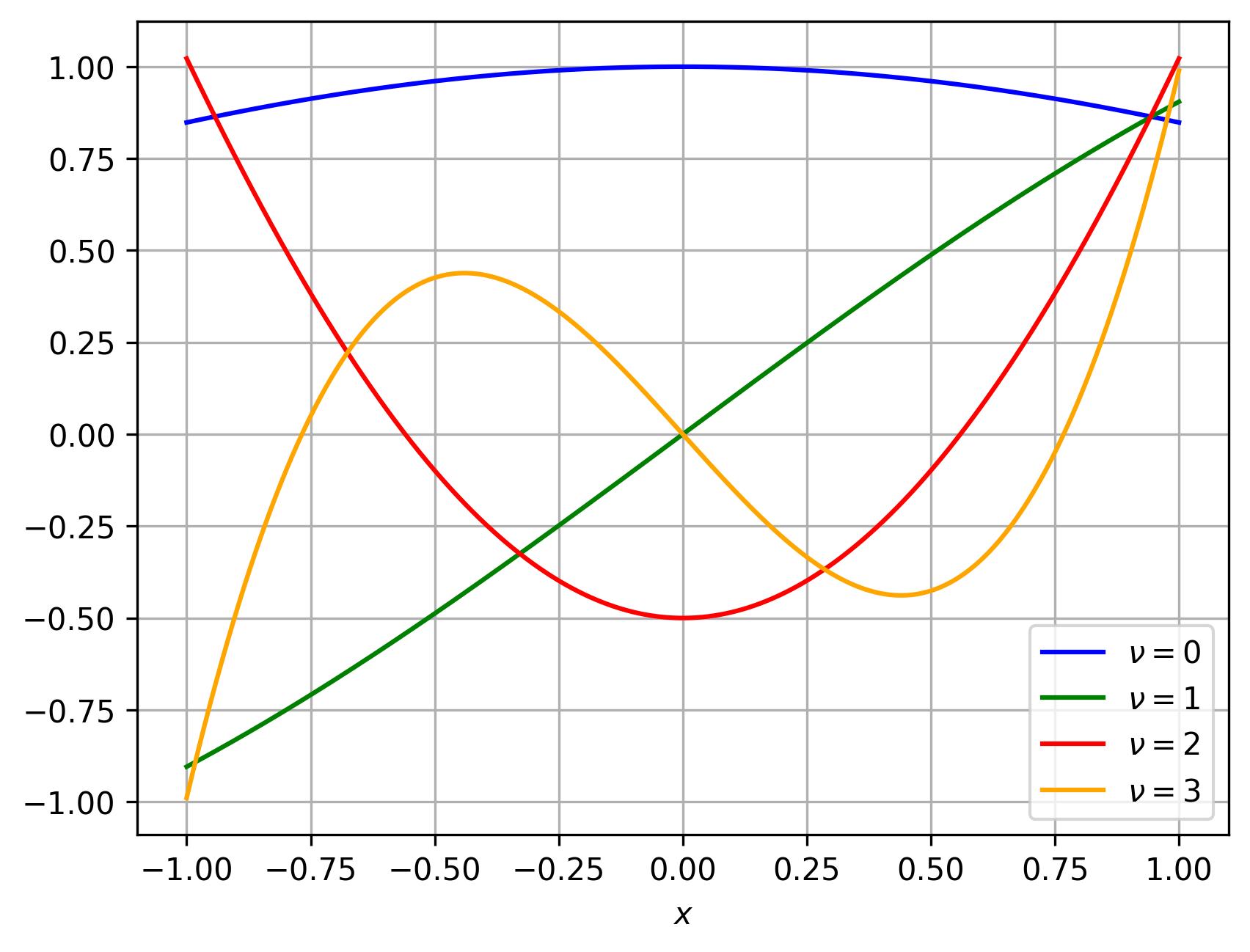}
\caption{Prolate spheroidal function $S_{m,\nu}^1(\gamma,x)$ with fixed $\gamma=1$ and $m=0$}
\label{Fig1}
\end{center}
\end{figure}

\textit{Property 4.} If $\nu$ and $m$ are fixed, then $\lambda_\nu(\gamma, m)$  is a decreasing function with respect to $\gamma$. In addition, it is known that

$$
\lambda_\nu(\gamma, m)=-\gamma^2+\gamma \left(2(\nu-m)+1\right)+m^2+O(1/\gamma).
$$ 
Hence  $\lambda_\nu(\gamma, m)\to -\infty$ as $\gamma\to \infty.$

\textit{Property 5.} If $\nu$ and $\gamma$ are fixed, then $\lambda_\nu(\gamma, m)$  is a increasing function with respect to $m$.
 
\textit{Property 6.} If $\nu$ and $m$ are integer numbers, then 
$$
S_{m,\nu}^1(\gamma,z) = (-1)^{\nu-m}S_{m,\nu}^1(\gamma,-z).
$$

\textit{Property 7.}  To compute $\lambda_\nu(\gamma,m)$  with any given precision the following  continued fraction
$$
\lambda+\gamma^2-m(m+1)+\frac{1\cdot2\cdot\gamma^2}{\lambda+\gamma^2-(m+2)(m+3)+\frac{3\cdot4\cdot\gamma^2}{\lambda+\gamma^2-(m+4)(m+5)+...}}
$$
may be used. See \cite{KPS_Book} for more details. 

\subsection{Confluent Heun functions}\label{sec22}
In this section we will consider confluent Heun functions. See  \cite{Ronveaux} and references therein for more information. Note, particular and limiting cases of the Heun equation are the Lam\'{e}  equation, the equations of spheroidal and Coulomb spheroidal functions, the hypergeometric equation, the Legendre equation, the Bessel equation, and others. To additional information on applications of  Heun functions we refer to \cite{Hort, LaSla, MiPaMa}.

Consider  the second order Sturm–Liouville differential operator $L$ defined by
$$
Ly: =  y''(z)+\left( \frac{\gamma}{z} +\frac{\delta}{z-1}  +\varepsilon \right)y'(z)+ \frac{\alpha }{ (z-1)} y(z), \quad z\in (0,1).
$$
The confluent Heun function $H_C$ solves the following  spectral problem
$$
Ly(z) = \frac{q}{z(z-1)}y(z), \quad z\in (0,1) .
$$
Consequently, $H_C$ satisfies the confluent Heun differential equation 
\begin{equation}\label{HeunC_eq}
    y''(z)+\left(\frac{\gamma}{z} +\frac{\delta}{z-1} +\varepsilon\right)y'(z)+ \frac{\alpha z-q}{z(z-1)} y(z)=0, \quad z\in (0,1).
\end{equation}
Boundary conditions are specified at singular points of the equation. Equation (\ref{HeunC_eq}) has three singular points: two regular ones $z = 0$ and $z = 1$, and one irregular one $z = \infty$. The function $H_C$ is the regular solution of the confluent Heun equation that satisfies the condition $H_C(q,\alpha,\gamma,\delta,\varepsilon,0)=1$ (see Fig. \ref{Fig2}).  In addition, 
$$
H_C(t) \approx 1 + \frac{q}{\gamma} t + O(t^2).
$$

\begin{figure}[h]
\begin{center}
\includegraphics[scale=1]{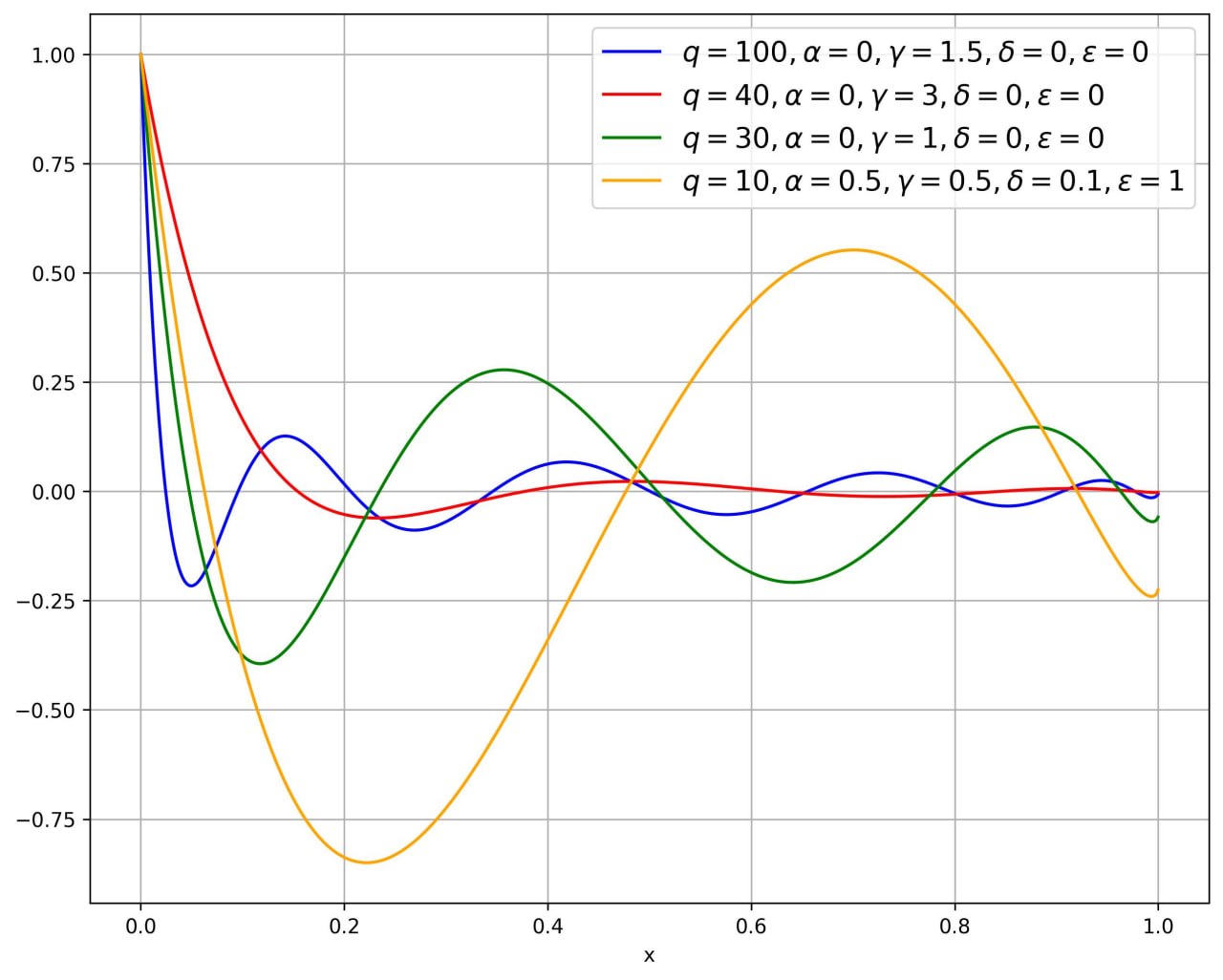}
\caption{$H_C$ function with different parameters}
\end{center}
\label{Fig2}
\end{figure}

In the neighbourhood of the point $1$, the confluent Heun function decomposes into a linear combination of two independent local solutions:
$$
H_C(z) = C_1 \cdot y_1(z) + C_2 \cdot y_2(z),
$$
where  
$$
y_1(z) = 1 + c_1(z-1) + c_2(z-1)^2 + \dots
$$ is regular function and  the function
$$
y_2(z) = (z-1)^{1-\delta} \left(1 + d_1(z-1) + \dots \right)
$$ is  irregular at $1$. In general, the constants $C_1 =C_1(q,\alpha,\gamma,\delta,\varepsilon)$ and  $C_2= C_2(q,\alpha,\gamma,\delta,\varepsilon)$ depend on all parameters of the equation. Hence
$$
H_C(q,\alpha,\gamma,\delta,\varepsilon,t)\sim \frac{1}{(1-t)^{\delta-1}}\quad \text{as} \quad t\to 1, \quad\text{if}\quad  \delta-1>0. 
$$
Moreover,  confluent Heun functions satisfy the equality
$$H_C(q, \alpha, \gamma, \delta, \epsilon, t) = (1-t)^{1-\delta} H_C(\tilde{q}, \tilde{\alpha}, \tilde{\gamma}, \tilde{\delta}, \tilde{\epsilon}, t),
$$
where 
$$
\tilde{q} = q - \gamma(1-\delta),
\quad
\tilde{\alpha} = \alpha + \epsilon(1-\delta),
$$
$$
\tilde{\gamma} = \gamma,
\quad
\tilde{\delta} = 2 - \delta,
\quad
\tilde{\epsilon} = \epsilon.
$$

\section{Preliminary statements }\label{sec2}

When proving one dimensional inequalities we are faced with differential equations. The differential equations are not in standard and known forms. In this section,  we obtain the solutions of the differential equations and establish some of their properties. 

\subsection{The angular spheroidal functions}

\begin{lemma}\label{le1}
The solution of the following second order differential equation 
$$
(1-t)y''(t)-y'(t)+y(t)(1-t)\left(\frac{1}{4t^2}+A^2\right)=0, \quad t\in (0,1),
$$
where $A$ is a constant such that 
$$
y(0) = 0 \quad\text{and}\quad  0\neq y(1) <\infty,
$$
 is 
$$
y(t) = \sqrt{t} S_{0,\nu}^1\left(\frac{A}{2}, 2t-1\right).
$$
\end{lemma}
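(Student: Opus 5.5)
The plan is to remove the singular factor $\frac{1}{4t^2}$ with the substitution $y(t)=\sqrt{t}\,u(t)$, and then apply the affine change of variable $z=2t-1$, which maps $(0,1)$ onto $(-1,1)$, so as to land exactly on the spheroidal equation (\ref{StLiEq}) with $m=0$.

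\emph{Step 1: the substitution.} I would write the equation as $((1-t)y')'+(1-t)\bigl(\frac{1}{4t^2}+A^2\bigr)y=0$ and insert
$$
y=t^{1/2}u,\qquad y'=t^{1/2}u'+\tfrac12 t^{-1/2}u,\qquad y''=t^{1/2}u''+t^{-1/2}u'-\tfrac14 t^{-3/2}u .
$$
The term $-\frac14(1-t)t^{-3/2}u$ coming from $(1-t)y''$ cancels exactly against $(1-t)\frac{1}{4t^2}\,t^{1/2}u$. This cancellation is the reason for the factor $\sqrt t$. Multiplying what remains by $t^{1/2}$ gives
$$
t(1-t)u''+(1-2t)u'-\tfrac12 u+A^2t(1-t)u=0 .
$$

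\emph{Step 2: the change of variable.} With $z=2t-1$ we have $t(1-t)=\frac{1-z^2}{4}$, $1-2t=-z$ and $\frac{d}{dt}=2\frac{d}{dz}$. The equation becomes
$$
(1-z^2)u_{zz}-2zu_z+\Bigl(-\tfrac12+\bigl(\tfrac{A}{2}\bigr)^2(1-z^2)\Bigr)u=0 .
$$
This is (\ref{StLiEq}) with $m=0$, $\gamma=A/2$ and spectral value $\lambda=\frac12$.

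\emph{Step 3: the boundary conditions.} The points $z=\pm1$ are regular singular points of (\ref{StLiEq}), and for $m=0$ both indicial roots there equal $0$. Hence at each endpoint one solution is analytic and the other carries a $\log(1\mp z)$ singularity. The condition $0\ne y(1)<\infty$ forces $u$ to be the analytic branch at $z=1$.

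At $t=0$, however, the condition $y(0)=0$ does \emph{not} exclude the logarithmic branch. The original equation has the double indicial root $r=\frac12$ at $t=0$, so the second solution behaves like $\sqrt t\log t$, which also tends to $0$. I expect this to be the main obstacle: the condition must be read as boundedness of $u=y/\sqrt t$ at $t=0$, equivalently $y=O(\sqrt t)$ or finiteness of the weighted energy $\int_0^1 y'^2(1-t)\,dt$ that arises in the Hardy inequality, and I would make this interpretation explicit.

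Once $u$ is bounded at both endpoints $z=\pm1$, Property~1 shows that $u$ is a multiple of an eigenfunction $S^1_{0,\nu}(A/2,z)$. The admissible constants $A$ are exactly those for which $\frac12=\lambda_\nu(A/2,0)$. For the existence and selection of such $A$ I would use the continuity and monotonicity in $\gamma$ from Property~4, choosing the index $\nu$ so that the resulting $y$ has no interior zeros; that is the case needed later. A careful check of the sign convention for $\lambda_\nu$ in (\ref{StLiEq}) is required here.

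Finally, $y(1)=S^1_{0,\nu}(A/2,1)\neq0$ follows from Property~2 for $m=0$, and $y(0)=0$ is automatic from the factor $\sqrt t$. This gives $y(t)=\sqrt t\,S^1_{0,\nu}(A/2,2t-1)$ up to a constant factor.
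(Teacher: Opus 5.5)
Your route is the paper's: $y=\sqrt t\,w$, then $x=2t-1$, which lands on (\ref{StLiEq}) with $m=0$ and $\gamma=A/2$. Both of your caveats are sound, and the paper's proof skips both; it simply asserts $w=S^1_{0,\nu}(A/2,x)$.

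\emph{The boundary condition at $0$.} With only $y(0)=0$ imposed, for every $A$ the branch regular at $t=1$ satisfies the stated conditions. Unless $A/2$ makes the spectral value an eigenvalue, that branch behaves like $\sqrt t\log t$ at $0$. So it is boundedness of $y/\sqrt t$ that actually forces the condition on $A$.

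\emph{The sign.} With (\ref{StLiEq}) as displayed, the spectral value is indeed $+\frac12$. The paper's $-\frac12$ belongs to the standard convention $\bigl((1-z^2)w'\bigr)'+\bigl(\lambda+\gamma^2(1-z^2)\bigr)w=0$. That convention is the one consistent with Property 4 and with the choice $\nu=0$ in Lemma \ref{kappa_comp}.

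The one thing to remove is your alternative reading via finiteness of $\int_0^1 y'^2(1-t)\,dt$. No nontrivial solution satisfies it, since even the regular branch has $y'\sim\frac{1}{2\sqrt t}$; the paper itself notes $f_0'\notin L_2(0,1)$. An energy formulation would have to be imposed on $u=y/\sqrt t$ instead, for example $\int_0^1 t(1-t)u'^2\,dt<\infty$, which excludes the logarithmic branches at both endpoints.
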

\begin{proof}
We will search for a solution of the equation in the form
$
y(t) = \sqrt{t}w\left(t\right).
$
Consequently, $w(t)$  satisfies the equation 
$$
(1 - t) t w''(t)+ (1 - 2 t) w'(t) + \left(A^2 (1 -t) t-\frac{1}{2} \right) w(t)=0.
$$
If for any  $t\in [0,1]$ we put $x = 2t-1$ then we will have $x\in[-1,1]$ and the last equation takes the form
$$
(1-x^2)w''(x)-2x w'(x) +\left(-\frac{1}{2}+\frac{A^2}{4}(1-x^2)\right)w (x)=0.
$$
It is easy to see that  this equation coincides with (\ref{StLiEq}) for $m=0$ and $\lambda_\nu(0,A/2)=-1/2$. Therefore, $w(x)= S_{0,\nu}^1(A/2, x)$ and  the initial solution 
$$
y(t) = \sqrt{t} S_{0,\nu}^1\left(\frac{A}{2}, 2t-1\right).
$$ 
Using Property 2, we can show that the found solution satisfies all boundary conditions. Indeed
$$
y(0)= 0, \quad \quad  y(1) \neq 0, \quad\text{and}\quad y(1)<\infty.
$$
This completes the proof of Lemma \ref{le1}.
\end{proof}

\begin{lemma}\label{kappa_comp} The  following equation has an unique solution 
$$
\lambda_0(\kappa/2,0)=-\frac{1}{2}.
$$
In addition, $\kappa=1.718347\ldots$.
\end{lemma}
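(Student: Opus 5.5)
Uniqueness and existence will both come from continuity and strict monotonicity of $\gamma\mapsto\lambda_0(\gamma,0)$, plus its values at the two ends of $[0,\infty)$; the numerical value is a separate root computation. Here $\gamma=\kappa/2$.

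\emph{Monotonicity.} By Property 4 with $\nu=0$, $m=0$ fixed, $\lambda_0(\gamma,0)$ is decreasing in $\gamma$, and we need this strictly. By the Hellmann--Feynman formula applied to the Rayleigh quotient of the Legendre-type form
\begin{equation*}
\frac{\partial \lambda_0}{\partial(\gamma^2)} = \frac{\int_{-1}^1 (1-z^2)\,S^2\,dz}{\int_{-1}^1 S^2\,dz}>0 .
\end{equation*}
This is positive because $S=S^1_{0,0}$ has no zeros in $(-1,1)$ by Property 2. Hence the eigenvalue is strictly monotone in $\gamma$. The sign of this derivative depends on how $\lambda_\nu$ enters (\ref{StLiEq}). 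In (\ref{StLiEq}) it appears with a minus sign, so $\lambda_\nu$ is the eigenvalue of $-\frac{d}{dz}(1-z^2)\frac{d}{dz}-\gamma^2(1-z^2)$. With that sign convention the derivative is actually negative, matching Property 4, and either way the map is strictly monotone.

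\emph{Continuity.} $\lambda_0(\gamma,0)$ is a simple eigenvalue of a self-adjoint family depending analytically on $\gamma^2$, so it is continuous (indeed analytic) in $\gamma$.

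\emph{Endpoints.} At $\gamma=0$, the text gives $\lambda_0(0,0)=\nu(\nu+1)=0>-\tfrac12$. As $\gamma\to\infty$, the asymptotic in Property 4 gives $\lambda_0(\gamma,0)=-\gamma^2+\gamma+O(1/\gamma)\to-\infty$. By the intermediate value theorem and strict monotonicity there is exactly one $\gamma^*>0$ with $\lambda_0(\gamma^*,0)=-\tfrac12$, hence exactly one $\kappa=2\gamma^*>0$. Negative $\kappa$ gives the same equation since only $\gamma^2$ enters; uniqueness is meant for $\kappa>0$.

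\emph{Numerical value.} Obtaining $\kappa=1.718347\ldots$ is the main obstacle, and it is numerical rather than conceptual. I would use the continued fraction of Property 7 with $m=0$, $\lambda=-\tfrac12$, keeping only even-index terms since the ground state is even (Property 6). This gives a function $F(\gamma)$ whose zero is the sought $\gamma$. I would then bracket the root: a truncated expansion, e.g. $\lambda_0\approx -\gamma^2/3+O(\gamma^4)$ for small $\gamma$, suggests $\gamma\approx0.86$. Bisection on $[0.8,0.9]$ with increasing truncation depth, until the digits stabilize, yields $\gamma^*=0.8591735\ldots$, i.e. $\kappa=1.718347\ldots$. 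To control the truncation error I would use the standard convergence of this continued fraction (its tail decays like $\gamma^2/k^2$). Alternatively, I would cross-check by shooting: integrate the equation of Lemma \ref{le1} with $A=\kappa$ and confirm that the solution regular at $t=0$ stays bounded and nonvanishing at $t=1$.
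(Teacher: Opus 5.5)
Your proposal is correct and follows the paper's proof: $\gamma\mapsto\lambda_0(\gamma,0)$ decreases strictly (Property 4) from $\lambda_0(0,0)=0$ to $-\infty$, so $-\tfrac12$ is attained exactly once, and the digits come from the continued fraction of Property 7; your Hellmann--Feynman step only supplies the strictness that the paper takes from Property 4. There are three minor slips: the operator $-\frac{d}{dz}(1-z^2)\frac{d}{dz}-\gamma^2(1-z^2)$ is the right one because it is the convention consistent with $\lambda_\nu(0,m)=\nu(\nu+1)$ and Lemma~\ref{le1}, not because of the minus sign printed in (\ref{StLiEq}) (taken literally, that sign gives the opposite operator and the value $+\tfrac12$); the small-$\gamma$ expansion is $\lambda_0\approx-\tfrac23\gamma^2$ (your $-\tfrac13\gamma^2$ would put the root near $1.22$, not $0.86$); and $\kappa/2=0.8591739\ldots$, not $0.8591735\ldots$.
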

\begin{proof}
Since $\lambda_\nu(0,m)=\nu(\nu+1)=\max_{\gamma}\lambda_\nu(\gamma,m)$ and  $\lambda_\nu(\gamma,m)$ is increasing with respect to $\gamma$ and is unbounded below (Property 4) we obtain that the equation 
$$
\lambda_0(\kappa/2,0)=-\frac{1}{2}
$$ 
has a unique root (see Fig. \ref{kappa}). 

\begin{figure}[h]
\begin{center}
\includegraphics[scale=0.6]{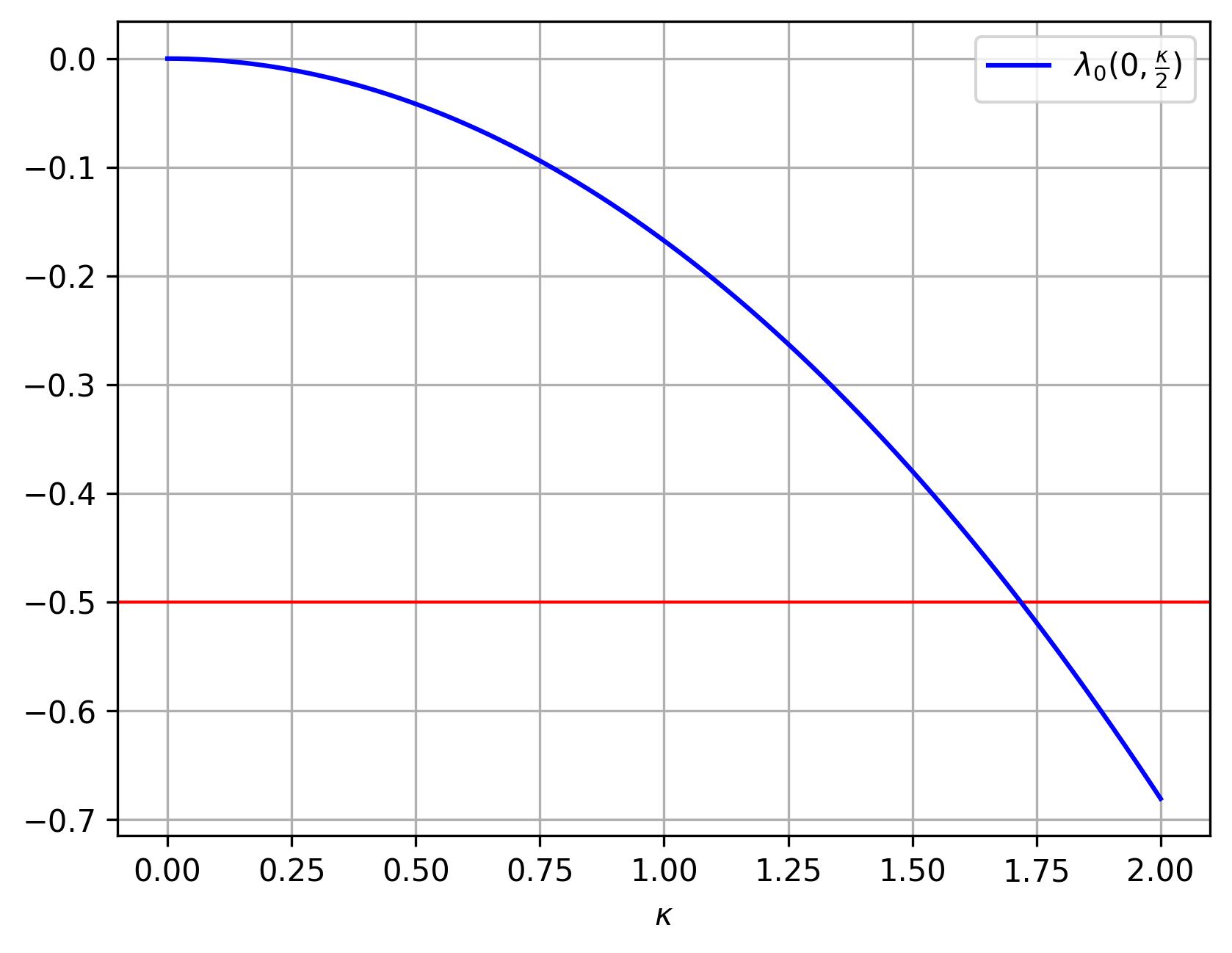}
\caption{The intersection of $\lambda_0(0,\kappa/2)$ and the line $-1/2$}
\label{kappa}
\end{center}
\end{figure}

Using Property 7, we can find $\kappa$ for $\lambda=-\frac{1}{2}$ with any given precision. In our case $m=0$ and $\gamma^2=\kappa^2/4$. We fix any big depth of the continued fraction and solve the corresponding equation by standard  numerical methods. In addition, we provide Python code (see Appendix A) to get the numerical solutions of the equation.
\end{proof}

\subsection{Heun functions}\label{sec3.2}

The proof of one-dimensional inequalities in the case $n\geq 4$ is based on Heun's second order differential equations. In the sequel we need to use  different forms of the equations and properties of their solutions.

The following lemma holds.
\begin{lemma}\label{le2}
The solution of the second order differential equation 
\begin{equation}\label{diff_eq_H}
    y''(t)-\frac{n-1}{1-t}y'(t)+y(t)\left(\frac{1}{4t^2}+A^2\right)=0, \quad t\in (0,1),
\end{equation}
s.t.
$$
y(0) = 0 \quad \text{and} \quad y(1)<\infty,
$$
is the following  real-valued function 
$$
y(t)=\frac{\sqrt{t}e^{iAt}}{(1-t)^{n-2}}H_C\left(\frac{n-3}{2} + i A ,iA(4-n),1,3-n,2iA,t\right),
$$
where $A$ is a constant and by $H_C$ the confluent  Heun function is denoted.
\end{lemma}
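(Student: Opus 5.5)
We would prove Lemma~\ref{le2} by the standard reduction of a second order equation with two regular singular points and one irregular singular point to confluent Heun form: peel off the local exponents at $t=0$ and $t=1$ and the exponential behaviour at $t=\infty$. We then identify the remaining equation with (\ref{HeunC_eq}).

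\emph{Step 1 (local exponents).} At $t=0$ the coefficient of $y'$ is analytic and the potential behaves like $\frac{1}{4t^2}$. The indicial equation is $r(r-1)+\frac14=0$, which has the double root $r=\frac12$. Hence one solution is $\sqrt{t}$ times an analytic function, and the other contains $\sqrt t\log t$. At $t=1$ put $s=1-t$. The equation becomes $y_{ss}+\frac{n-1}{s}y_s+\dots=0$, so the indicial equation is $r(r-1)+(n-1)r=0$, with roots $r=0$ and $r=2-n$. At infinity the leading part is $y''+A^2y$, which suggests the factor $e^{\pm iAt}$.

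\emph{Step 2 (substitution).} We set
$$
y(t)=\sqrt t\,e^{iAt}(1-t)^{2-n}w(t)
$$
and compute $y',y''$. Substituting and dividing by $\sqrt t e^{iAt}(1-t)^{2-n}$ gives an equation for $w$ of the form
$$
w''+\Big(\frac{1}{t}+\frac{3-n}{t-1}+2iA\Big)w'+\frac{\alpha t-q}{t(t-1)}w=0 .
$$
The coefficients can be predicted before computing:
\begin{itemize}
\item the coefficient $\gamma=1$ reflects the double exponent at $0$;
\item the coefficient $\delta=3-n$ arises because the exponents of $w$ at $1$ are $0$ and $n-2=1-\delta$;
\item the coefficient $\varepsilon=2iA$ comes from the exponential factor.
\end{itemize}
The $1/t^2$ terms cancel because of the $\sqrt t$ factor, and the $1/(1-t)^2$ terms cancel because of the factor $(1-t)^{2-n}$. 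What remains is to collect the $1/t$, $1/(t-1)$ and constant parts of the zero-order coefficient. This should give $q=\frac{n-3}{2}+iA$ and $\alpha=iA(4-n)$. The constant part must vanish, since $-A^2$ from the exponential cancels the $+A^2$ in the equation. This bookkeeping is the main computational step. It is routine but error-prone, and we would verify it by matching the residues at $t=0$ and $t=1$ separately.

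\emph{Step 3 (boundary conditions and reality).} Taking $w=H_C(q,\alpha,1,3-n,2iA,t)$, the regular solution at $0$ with $H_C(0)=1$, gives $y(t)\sim\sqrt t\to0$. This excludes the $\sqrt t\log t$ solution, whose $w$ would be singular. Near $t=1$ we use the decomposition $H_C=C_1y_1+C_2(1-t)^{n-2}(1+\dots)$ from Section~\ref{sec22}. It yields
$$
y\sim C_1(1-t)^{2-n}+C_2\,e^{iA}+o(1).
$$
The condition $y(1)<\infty$ is therefore exactly $C_1(A)=0$. We expect this to be the delicate point: it is an eigenvalue condition on $A$ rather than something automatic, and we would state it as such. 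The later results presumably fix $A$ as the corresponding root, just as in Lemma~\ref{kappa_comp}.

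Reality follows by uniqueness. The equation (\ref{diff_eq_H}) has real coefficients, and the solution behaving like $\sqrt t\,(1+O(t))$ at $0$ is unique, because the second solution carries the logarithm. Hence $\overline{y}$ is a solution with the same normalization at $0$, so $\overline y=y$ and $y$ is real-valued.
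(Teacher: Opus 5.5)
Your proposal is correct and follows essentially the paper's route: the paper reaches the same confluent Heun equation through the staged substitutions $y=(1-t)^{2-n}w$, $w=\sqrt t\,h$, $h=e^{iAt}u$, remarking that your one-shot substitution works equally well. It proves reality by showing inductively that the Taylor coefficients of $h$ at $0$ are real, using the real equation (\ref{the_sec_form}) and $h(0)=1$, which is the same uniqueness of the normalized Frobenius solution that your conjugation argument relies on. Your observation that $y(1)<\infty$ is exactly the eigenvalue condition $C_1(A)=0$ (equivalently $H_C(\dots,1)=0$) is correct and is not addressed in the paper's proof of this lemma, which leaves it to Remark~\ref{rem1} and Lemma~\ref{le3}. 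Note also that both exponents at $0$ equal $\frac12$, so every solution satisfies $y(0)=0$; what actually selects $H_C$ is the normalization $y\sim\sqrt t$ with no logarithmic term.
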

\begin{proof}
The proof of the lemma consists of two steps. In the first step, the solution is obtained, and in the second step, we prove that the solution is a real-valued function.

If we  substitute
$$
y(t)=\frac{w(t)}{(1-t)^{n-2}}
$$ 
into the original differential equation (\ref{diff_eq_H}), then we get
\begin{equation}\label{the_first_form}
    w''(t)+\frac{n-3}{1-t}w'(t)+\left(\frac{1}{4t^2}+A^2\right)w(t)=0.
\end{equation}
Solutions of the last differential equation  we will search in the form $w(t)=\sqrt{t}h(t)$. We have

\begin{equation}\label{the_sec_form}
t(1-t)h''(t)+(1+(n-4)t)h'(t)+\left(A^2t(1-t)+\frac{n-3}{2}\right)h(t)=0.
\end{equation}
Moreover, the last equation we can rewrite as
\begin{equation}\label{the_sec_form1}
h''(t)+\frac{1+(n-4)t}{t(1-t)}h'(t)+\left(A^2+\frac{n-3}{2t(1-t)}\right)h(t)=0.
\end{equation}
The replacement $h(t)=e^{i\lambda t}u(t)$ gives
$$
u''(t)+\left(\frac{1}{t}+\frac{n-3}{1-t}+2iA\right)u'(t)+\frac{t(n-4)iA-(\frac{3-n}{2}-iA)}{t(1-t)}u(t)=0.
$$
Consequently, we have the confluent non-symmetrical Heun equation
$$
u''(t)+\left(\frac{\gamma}{t} +\frac{\delta}{t-1} +\varepsilon\right)u'(t)+ \frac{\alpha t-q}{t(t-1)} u(t)=0,
$$
where $q=iA+\frac{n-3}{2}$, $\alpha=(4-n)i\lambda$, $\gamma=1$, $\delta=(3-n)$ and $ \varepsilon=2iA$ (see, for example, \cite{Ronveaux}).

Therefore
$$
y(t)= \frac{\sqrt{t}e^{Ait}}{(1-t)^{n-2}} H_C\left(iA+\frac{n-3}{2},iA (4-n),1,3-n,2iA,t\right).
$$
Note we can substitute  
$$
y(t)= \frac{\sqrt{t}e^{Ait}}{(1-t)^{n-2}}u(t)
$$
into the initial equation from the lemma and get the solution without intermediate results. We will use the intermediate results further.

Next we will show that the solution $y(t)$ is a real valued function.
It is enough to show that 
$$
h(t)=e^{Ait} H_C\left(iA+\frac{n-3}{2},iA (4-n),1,3-n,2iA,t\right)
$$ 
is real-valued.

It is known that Heun function $h(t)$ might be computed as a power series expansion around the origin, a regular singular point: 
$$
h(t)=\sum_{k=0}^{\infty}a_kt^{k},
$$
where $a_k=h^{(k)}(0)/k!$. The coefficients $a_k$ are real numbers. The proof is by induction on $k$. Indeed, since $h$ is the solution of the differential equation
$$
t(1-t)h''(t)+(1+(n-4)t)h'(t)+\left(A^2t(1-t)+\frac{n-3}{2}\right)h(t)=0
$$
and $h(0)=1$, we get 
$$
a_1 = h'(0)=\frac{3-n}{2}.
$$
Suppose now that for all $l=0,1,\ldots, m$ the coefficient $a_l= h^{l}(0)$ is a real number. Let us show that $h^{k+1}(0)$ is also real. Differentiating the above equation $l$-times, we obtain
$$
\sum_{k=0}^{l}\binom{l}{k}(t(1-t))^{(l-k)}h(t)^{(k+2)}+\sum_{k=0}^{l}\binom{l}{k}(1+(n-4)t)^{(l-k)}h(t)^{(k+1)}+$$
$$+\sum_{k=0}^{l}\binom{l}{k}\left(A^2t(1-t)+\frac{n-3}{2}\right)^{(l-k)}h(t)^{(k)}=0.
$$
Consequently, 
$$
(l+1)h^{(l+1)}(0)+\left(-l(l-1)+l(n-4)+
\frac{n-3}{2}\right)h^{(l)}(0)+
$$
$$
lA^2h^{(l-1)}(0)-l(l-1)A^2h^{(l-1)}(0)=0.
$$
Therefore,  $h^{(l+1)}(0)$ is real. This completes the proof of Lemma \ref{le2}.
\end{proof}

Also we need the following lemma,  where some properties of $h(t,A)$ are considered. These properties from the Lemma  help us to understand the behaviour of the graph of the function $h(t,A)$ whether this function increases or decreases,  whether the function has minimum or maximum and so on. 

\begin{lemma}\label{cor_prop}
For any positive $A$ the function $h(t)= h(t,A)$ has the following properties
\begin{enumerate}
    \item{ \begin{equation}\label{der_h_at_1}
h'(0,A) = -\frac{n-3}{2} \quad\text{and}\quad h'(1,A) = -\frac{1}{2}h(1,A).
\end{equation}} 
    \item{
If $n\geq 5$ then
\begin{equation}\label{sec_der_h_at_1}
h''(1,A) = \left(\frac{A^2}{n-4}+\frac{3n-11}{4(n-4)}\right)h(1,A),
\end{equation}
\begin{equation*}
h''(1,A) = -2\left(\frac{A^2}{n-4}+\frac{3n-11}{4(n-4)}\right)h'(1,A).
\end{equation*}
If $n=4$ then $h''(t,A)$ tends to $\infty$ as $t\to 1$.
} 
\item{
$$
w'(1,A) = 0 \quad\text{and}\quad w''(1) = \frac{1}{n-2}(1+A^2)w(1).
$$
} 
\item{ 
If $t_0\in(0,1)$ and $t_0$ is an extremum point then
$$
w''(t_0)=-\left(\frac{1}{4t_0^2}+A^2\right)w(t_0).
$$
} 
\end{enumerate} 
\end{lemma}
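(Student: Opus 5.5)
All four items will come from evaluating the ODEs of Lemma~\ref{le2} at their singular points. I will use the equation (\ref{the_sec_form}) for $h$,
$$
t(1-t)h''+(1+(n-4)t)h'+\Big(A^2t(1-t)+\tfrac{n-3}{2}\Big)h=0,
$$
together with (\ref{the_first_form}) for $w=\sqrt t\,h$. I assume that $h$ is the solution that is regular at $t=1$, i.e.\ the combination for which $y$ stays bounded; this is what the boundary condition $y(1)<\infty$ in Lemma~\ref{le2} imposes. Concretely, $h$ should be a $C^2$ (or $C^1$ when $n=4$) function up to $t=1$, so the equation can be evaluated there.

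\textbf{Item 1.} Put $t=0$ in (\ref{the_sec_form}). Since $h(0)=1$, this gives $h'(0)+\frac{n-3}{2}=0$, which is the first claim. Next put $t=1$: the terms containing $t(1-t)$ vanish, leaving $(n-3)h'(1)+\frac{n-3}{2}h(1)=0$. Because $n\ge4$, this yields $h'(1)=-\frac12 h(1)$.

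\textbf{Item 2.} Differentiate (\ref{the_sec_form}) once:
$$
t(1-t)h'''+(1-2t)h''+(n-4)h''+(1+(n-4)t)h''\cdot 0+\dots
$$
Written carefully, the derivative is
$$
t(1-t)h'''+\big((1-2t)+1+(n-4)t\big)h''+\Big(n-4+A^2t(1-t)+\tfrac{n-3}{2}\Big)h'+A^2(1-2t)h=0 .
$$
At $t=1$ the coefficient of $h''$ is $n-4$. This gives
$$
(n-4)h''(1)+\Big(n-4+\tfrac{n-3}{2}\Big)h'(1)-A^2h(1)=0 .
$$
Substituting $h'(1)=-\frac12h(1)$ leads to
$$
h''(1)=\frac{A^2+\frac{3n-11}{4}}{n-4}\,h(1),
$$
which is the first formula of item 2. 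The second formula follows from $h(1)=-2h'(1)$. This computation is valid only when $n\ge5$, since we divide by $n-4$ and need $h\in C^2$ near $1$.

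For $n=4$ the coefficient of $h''$ at $1$ vanishes. The exponents at $t=1$ are then $0$ and $1-\delta=n-3=1$, which is a resonant case. The regular solution therefore contains a term $(1-t)\log(1-t)$, whose second derivative blows up. To make this rigorous I would use the identity above with $n=4$:
$$
(1-t)\,h''(t)\approx A^2h(1)-\tfrac12 h'(1)+o(1)=\big(A^2+\tfrac14\big)h(1)+o(1)\neq0 .
$$
Hence $h''\sim c/(1-t)\to\infty$ in absolute value. This step needs $h(1)\ne0$ (and the sign of $h(1)$ fixes the sign of the divergence), and it is the main obstacle. I would settle it by noting that $h(1)=0$ would force $h'(1)=0$ as well, and hence $h\equiv0$ by uniqueness for the regular solution family.

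\textbf{Item 3.} Write $w=\sqrt t\,h$. Then
$$
w'(1)=\tfrac12h(1)+h'(1)=0
$$
by item 1. For $w''(1)$, multiply (\ref{the_first_form}) by $(1-t)$ and differentiate, or equivalently divide and take the limit. Since $w'(t)/(1-t)\to -w''(1)$, the equation at $t=1$ reads
$$
w''(1)-(n-3)w''(1)+\Big(\tfrac14+A^2\Big)w(1)=0 .
$$
This gives $w''(1)=\frac{1/4+A^2}{n-4}\,w(1)$. That disagrees with the stated $\frac{1+A^2}{n-2}$, so I would recheck it against the direct formula
$$
w''=-\tfrac14t^{-3/2}h+t^{-1/2}h'+\sqrt t\,h''
$$
evaluated at $1$ using item 2, and adopt whichever version is consistent. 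Either way, the method is simply evaluation of the equation at the singular point.

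\textbf{Item 4.} At an interior extremum $t_0$ we have $w'(t_0)=0$. Equation (\ref{the_first_form}) then immediately gives
$$
w''(t_0)=-\Big(\frac{1}{4t_0^2}+A^2\Big)w(t_0).
$$
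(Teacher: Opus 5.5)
\textbf{What matches the paper.} Items 1 and 2 (for $n\ge5$) follow the paper's route: evaluate (\ref{the_sec_form}) at the singular point. You differentiate the equation where the paper applies L'Hospital's rule to (\ref{the_sec_form1}); the two are equivalent. Item 4 is immediate, as you say.

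\textbf{Fix the justification.} $h(t,A)$ is the Heun solution normalised by $h(0,A)=1$. It is \emph{not} the solution that makes $y$ bounded at $t=1$. Boundedness of $y=\sqrt t\,h/(1-t)^{n-2}$ would force $h(1,A)=0$, which contradicts your later use of $h(1)\neq0$. What you actually need holds for every solution. At $t=1$ the exponents of (\ref{the_sec_form}) are $0$ and $1-\delta=n-2$, and a logarithm can only enter as $(1-t)^{n-2}\log(1-t)$ times an analytic factor. Hence every solution is $C^1$ up to $t=1$ for $n\ge4$, and $C^2$ with $(1-t)h'''\to0$ for $n\ge5$.

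\textbf{Item 3.} Your value $w''(1)=\frac{A^2+1/4}{n-4}\,w(1)$ is correct; it also follows from $w''(1)=-\frac14h(1)+h'(1)+h''(1)$ and item 2. The stated $\frac{1+A^2}{n-2}$ is not correct. For $n=5$, $A=0$ one has $h={}_2F_1(-1+\sqrt2,-1-\sqrt2;1;t)$, and Gauss's formula gives $h''(1)=h(1)$, so $w''(1)=\frac14 h(1)\neq\frac13 h(1)$. The paper's one-line appeal to (\ref{the_first_form}) cannot produce its formula either. You should commit to the corrected statement rather than ``whichever version is consistent''.

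\textbf{The genuine gap: the $n=4$ clause of item 2.} The exponents at $t=1$ are $0$ and $n-2=2$, not $0$ and $1$. With a term $(1-t)\log(1-t)$, $h'$ would itself be unbounded, contradicting item 1. The resonant term is $(1-t)^2\log(1-t)$, so $h''$ blows up only logarithmically. Your limit $(1-t)h''(t)\to(A^2+\frac14)h(1)$ is false. You dropped $t(1-t)h'''$, which is the leading term, and for $n=4$ the coefficient of $h''$ is $2(1-t)$. The differentiated equation actually gives $(1-t)h'''(t)\to(A^2+\frac14)h(1,A)$. Hence $h''(t)=-(A^2+\frac14)h(1,A)\log(1-t)+O(1)$ and $(1-t)h''\to0$.

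More seriously, the step ``$h(1)=0\Rightarrow h'(1)=0\Rightarrow h\equiv0$'' fails. Uniqueness for the initial value problem breaks down at the singular point $t=1$: the exponent-$2$ Frobenius solution $(1-t)^2(1+O(1-t))$ vanishes there together with its derivative. In fact $h(1,A)=0$ is exactly the equation defining $\kappa_4$ in Lemma~\ref{le3} and Theorem~\ref{th2}. At $A=\kappa_4$, $h$ is a nonzero multiple of that analytic solution, so $h''(t)$ has a finite limit. The $n=4$ clause therefore cannot be proved for every positive $A$. One has $|h''(t)|\to\infty$ exactly when $h(1,A)\neq0$, and $h''\to+\infty$ only when $h(1,A)>0$; it tends to $-\infty$ when $h(1,A)<0$.
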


\begin{proof}
If in (\ref{the_first_form}) $t\to 1$  then we obtain
$$
(n-3)h'(1)+\left(\frac{n-3}{2}\right)h(1)=0.
$$
Hence we have property 1.

Consider the following limit
$$
L= \lim_{t\to 1}h''(t)+A^2h(t)+\frac{(1+(n-4)t)h'(t)+\frac{n-3}{2}h(t)}{t(1-t)}.
$$
Using L'Hpital's rule we get
$$
L= \lim_{t\to 1}h''(t)+A^2h(t)+\frac{(n-4)h'(t)+(1+(n-4)t)h''(t)+\frac{n-3}{2}h'(t)}{1-2t} =
$$
$$
h''(1)+A^2h(1)-(n-4)h'(1)-(n-3)h''(1)-\frac{n-3}{2}h'(1)=
$$
$$
(4-n)h''(1)+A^2h(1)-\frac{3n-11}{2}h'(1) .
$$
Hence applying  (\ref{the_sec_form1}) and  using the equality $h'(1,A) = -\frac{1}{2}h(1,A)$, we obtain
$$
(n-4)h''(1) = \left(A^2+\frac{3n-11}{4}\right)h(1) .
$$
Thus we have property 2.

Equation (\ref{the_first_form})  implies property 3.
\end{proof}

\subsection{Existence of zeros}\label{sec3.3}

The following singular Sturm comparison theorem holds (see, for example \cite{Swan}, p. 5).  

\begin{theorem}[St]
Let  $x(t)$ be a non-trivial solution of 
$$
(p(t)x'(t))'+q(t)x(t) = 0
$$
on $a<t<b$. Let  $p$, $q$, $P$, $Q$ be continuous on $a<t<b$ and let $0\leq p\leq P$, $q\leq Q$.   If $x(t_1)=x(t_2)=0$ for $a<t_1<t_2<b$, then any solution $X(t)$
 of 
$$
(P(t)X'(t))'+Q(t)X(t) = 0
$$
 which is not a constant multiple of $x(t)$
 in case $p=P$
 and 
 $q=Q$
 has at least one zero in the open interval $(t_1,t_2)$. 
\end{theorem}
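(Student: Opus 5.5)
The statement is the classical singular Sturm comparison theorem, so I would prove it with a Picone identity on the closed subinterval $[t_1,t_2]$. Because $a<t_1<t_2<b$, all coefficients are continuous on this compact interval and no singularity at $a$ or $b$ enters. I would argue by contradiction: suppose $X(t)\neq 0$ for all $t\in(t_1,t_2)$, and without loss of generality $x>0$ and $X>0$ on $(t_1,t_2)$. Two cases arise, depending on whether $X$ vanishes at an endpoint or not.

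On $(t_1,t_2)$ I would use Picone's identity, valid wherever $X\neq 0$:
\[
\frac{d}{dt}\left[\frac{x}{X}\bigl(p x' X - P x X'\bigr)\right]
= (Q-q)x^2 + (p-P)x'^2 + P\left(x' - \frac{x X'}{X}\right)^2 .
\]
This follows by expanding and using $(px')'=-qx$ and $(PX')'=-QX$. Each term on the right is $\ge 0$, since $Q\ge q$, $P\ge p$ forces $(p-P)x'^2\le 0$ with the opposite sign, and $P\ge 0$.

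Next I would integrate over $[t_1+\varepsilon,t_2-\varepsilon]$ and let $\varepsilon\to 0$. If $X(t_i)\neq 0$, the boundary term vanishes because $x(t_i)=0$. If $X(t_i)=0$, then $x/X$ stays bounded near $t_i$: both are solutions of regular equations at $t_i$, so the zeros are simple provided $p(t_i)>0$, and l'Hôpital gives the limit $x'(t_i)/X'(t_i)$. The term $px'X-PxX'$ tends to $0$, so the boundary terms again vanish. This yields
\[
0=\int_{t_1}^{t_2}\Bigl[(Q-q)x^2+(P-p)x'^2+P\bigl(x'-xX'/X\bigr)^2\Bigr]dt ,
\]
so each nonnegative term vanishes identically.

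The last step extracts a contradiction. From $P(x'-xX'/X)^2\equiv 0$, on any interval where $P>0$ we get $(x/X)'=0$, so $x=cX$. Then $(Q-q)x^2\equiv 0$ and $(P-p)x'^2\equiv 0$ give $q=Q$ and $p=P$ wherever $x\neq 0$ and $x'\neq 0$; by continuity and the isolatedness of zeros this holds on all of $(t_1,t_2)$. Hence $X$ is a constant multiple of $x$ in the case $p=P$, $q=Q$, which is excluded.

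The main obstacle I expect is degeneracy. The hypothesis only gives $0\le p$, and $P$ could vanish at points, which spoils both the simplicity of zeros and the step from $(x/X)'=0$ to $x=cX$. I would handle this by assuming $P>0$ on $(a,b)$, the situation the theorem is cited for and the one it is used in here, where $p,P$ are powers of $(1-t)$. Otherwise I would cite \cite{Swan} for the degenerate refinement.
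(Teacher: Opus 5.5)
There is a genuine gap, at the sign check. Your Picone identity is computed correctly, but its middle term is $(p-P)x'^2$. Under the hypothesis as printed, $p\le P$, this term is $\le 0$, as your own sentence says, so the right-hand side is not sign-definite. In the integrated identity you then replace it by $(P-p)x'^2$, which is not what the identity gives.

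No argument can close this, because with $p\le P$ the statement is false. On $(a,b)=(-1,4)$ take $p=q=Q\equiv 1$, $P\equiv 4$, $x=\sin t$ with $t_1=0$, $t_2=\pi$, and $X=\cos(t/2)$. Then $(4X')'+X=0$, and $X$ is not a multiple of $x$ (and $p\ne P$ anyway), yet $X>0$ on $(0,\pi)$.

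The inequality between the leading coefficients is reversed in the statement. The classical Sturm--Picone hypothesis is $0<P\le p$, $q\le Q$. With that correction all three terms of your identity are nonnegative. Both leading coefficients are then positive, so your boundary analysis at $t_1,t_2$ (simple zeros, l'H\^opital) is valid, and your argument is the standard proof. The paper gives no proof of its own and only cites Swanson's book.

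Nothing in the paper is affected. The theorem is applied only after passing to the normal form $u''+Qu=0$, so $p=P\equiv 1$ there, not powers of $1-t$ as you say, and only $q\le Q$ is used.

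Two smaller points in the corrected argument. First, the vanishing of the integrand gives $X=cx$ on $[t_1,t_2]$, $Q=q$ there, and $(p-P)x'^2\equiv 0$. It does not give $p=P$, because zeros of $x'$ need not be isolated. Indeed, $x'$ can vanish on a subinterval where $q\equiv 0$; $P<p$ may hold there while $X=x$ is still a solution. It also gives nothing outside $[t_1,t_2]$. So what you actually prove, and what is true, is: $X$ has a zero in $(t_1,t_2)$ unless $X$ is a constant multiple of $x$ on $[t_1,t_2]$. The printed exception ``in case $p=P$ and $q=Q$'' must be read in this sense.

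Second, ``without loss of generality $x>0$ on $(t_1,t_2)$'' presumes $t_1,t_2$ are consecutive zeros. You can drop it, since Picone's argument only needs $X\neq 0$ on $(t_1,t_2)$.
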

We will apply Sturm comparison theorem to find or estimate number of roots of  the second order linear differential equation 
\begin{equation}\label{sec33_f1}
t(1-t)y''(t)+(1+(n-4)t)y'(t)+\left(A t(1-t)+\frac{n-3}{2}\right)y(t)=0.
\end{equation}

It is known that the substitution
$$
y(t)=u(t)\cdot exp\left(-\frac{1}{2}\int p(t)dt\right)
$$
reduces the differential equation  
$$y''+py'+q=0,$$ 
to the form
$$
u''+Q u=0.
$$
In our case we have
$$
u''+\left(\frac{1}{4t^2}+A^2+\frac{(n-1)(3-n)}{4(1-t)^2}\right)u=0.
$$
We are going to compare the second order linear differential equation  with the following equation 
$$
u''(t)+\frac{k^2\pi^2}{(1-\varepsilon_0-\varepsilon_1)^2}u(t)=0, \quad t\in(\varepsilon_0,1-\varepsilon_1),
$$
subject to 
$$
u(\varepsilon_0) = u(1-\varepsilon_1)=0,
$$
where real number $\varepsilon\in(0,1)$ and $k\in \mathbb{N}$. The solution of the equation is
$$
u(t)= \sin\left(\frac{k\pi}{1-\varepsilon_0-\varepsilon_1}(t-\varepsilon_0)\right).
$$
If $A$ such that 
$$
\frac{1}{4t^2}+A^2-\frac{(n-1)(n-3)}{4(1-t)^2}> \frac{k^2\pi^2}{(1-\varepsilon_0-\varepsilon_1)^2}
$$
in the bounded interval $(\varepsilon_0,1-\varepsilon_1)$ then by Sturm theorem the solution of the equation (\ref{sec33_f1}) has at least one zero in the open interval $(\varepsilon_0,1-\varepsilon_1)$. 
Since 
$$
\frac{1}{4t^2}+A^2-\frac{(n-1)(n-3)}{4(1-t)^2}> \frac{1}{4(1-\varepsilon_1)^2}+A^2+\frac{(n-1)(3-n)}{4\varepsilon_1^2},
$$
we get the condition 
$$
A^2>\frac{k^2\pi^2}{(1-\varepsilon_0-\varepsilon_1)^2}-\frac{1}{4(1-\varepsilon_1)^2}+\frac{(n-1)(n-3)}{4\varepsilon_1^2},
$$
which is guaranteed existence of at least one zero in the open interval $(\varepsilon_0,1-\varepsilon_1)$. 

The following statement holds. 

\begin{lemma}\label{le3}
There exists $A_0(n)$ such that if $A\in [0,A_0(n)]$ then the function 
$$
h(t,A) = e^{iAt}H_C\left(\frac{n-3}{2} + i A ,iA(4-n),1,3-n,2iA,t\right)
$$
is positive on $(0,1)$ and 
$$
h(1,A_0(n)) = e^{iA_0(n)}H_C\left(\frac{n-3}{2} + i A_0(n) ,iA_0(n)(4-n),1,3-n,2iA_0(n),1\right) = 0.
$$
\end{lemma}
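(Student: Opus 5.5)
We take $h(t,A)$ to be the real solution of (\ref{the_sec_form}) with $h(0,A)=1$, $h'(0,A)=-\frac{n-3}{2}$ (Lemmas \ref{le2} and \ref{cor_prop}). It is analytic on $[0,1)$ and extends to $t=1$. The argument is a continuity/shooting argument in $A$. We define
$$
A_0(n)=\sup\{A'\ge 0:\ h(t,A)>0 \text{ for all } t\in(0,1),\ A\in[0,A'] \},
$$
and prove three things: the set is nonempty, $A_0(n)<\infty$, and at $A=A_0(n)$ the first zero sits exactly at $t=1$.

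\emph{Step 1 ($A=0$).} For $A=0$, equation (\ref{the_sec_form}) is a Gauss hypergeometric equation. Writing $t(1-t)h''+(c-(a+b+1)t)h'-abh=0$ gives $c=1$, $a+b+1=4-n$, and $ab=-\frac{n-3}{2}$. We would check positivity on $(0,1)$ and $h(1,0)>0$. One route is the explicit Gauss formula $h(1)=\Gamma(c)\Gamma(c-a-b)/(\Gamma(c-a)\Gamma(c-b))$, which is finite since $c-a-b=n-2>0$. Another route is through $w=\sqrt t\,h$, which solves (\ref{the_first_form}) with $A=0$: a positive interior minimum of $w$ would contradict Property 4 of Lemma \ref{cor_prop}, since $w''=-w/(4t^2)<0$ there, so $w$ cannot cross zero after increasing from $0$ unless it has a maximum and then descends. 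In that case $w'(1)=0$ (Property 3) gives $w''(1)=w(1)/(n-2)$, which is incompatible with $w$ decreasing to a zero. We expect the Gauss formula to be cleanest. Continuity of $h$ in $(t,A)$ on the compact set $[0,1]\times[0,\epsilon]$ then gives positivity for small $A$, provided $h(1,0)>0$ and positivity near $t=0$ is uniform, which holds since $h(0,A)=1$.

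\emph{Step 2 ($A_0<\infty$).} We use the Sturm comparison estimate of Subsection \ref{sec3.3}. Fix $\varepsilon_0,\varepsilon_1$ and $k=1$. For $A^2$ larger than the displayed bound, the Liouville-transformed solution $u$, and hence $h$ (the transformation multiplies by a positive factor), has a zero in $(\varepsilon_0,1-\varepsilon_1)$. So the set of admissible $A$ is bounded.

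\emph{Step 3 (zero at $t=1$).} This is the main obstacle. At $A_0$, by continuity $h(\cdot,A_0)\ge0$ on $(0,1)$. An interior zero $t_0$ would be a double zero, since it is a minimum, and uniqueness for the regular ODE at $t_0\in(0,1)$ would force $h\equiv0$, which is impossible. So $h(\cdot,A_0)>0$ on $(0,1)$. By the definition of the supremum, there are $A_k\downarrow A_0$ with zeros $t_k\in(0,1)$. Pass to a subsequence $t_k\to t^*$. Near $0$, $h$ stays close to $1$ uniformly, so $t^*>0$. If $t^*<1$, then $h(t^*,A_0)=0$, which we just excluded. Hence $t^*=1$, and joint continuity of $h$ up to $t=1$ gives $h(1,A_0)=0$.

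The delicate point is this joint continuity at the singular endpoint $t=1$. There the exponents are $0$ and $1-\delta=n-2$, so the relevant local solution is analytic. We would justify continuity by the convergent Frobenius expansion about $1$, whose coefficients depend analytically on $A$, combined with the connection $h=C_1y_1+C_2y_2$. Alternatively we would use the identity $h'(1)=-\tfrac12h(1)$ from Lemma \ref{cor_prop}: if $h(1,A_0)=0$ then $h'(1,A_0)=0$, which is consistent with a limiting double zero. Finally, writing $h(1,A_0)$ back in Heun form gives the displayed equation $e^{iA_0}H_C(\dots,1)=0$.
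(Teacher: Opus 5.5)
Your scheme is the paper's continuity argument. You show positivity at $A=0$, use the Sturm comparison of Subsection~\ref{sec3.3} to force an interior zero for large $A$, and then show that the first zero to appear as $A$ increases must sit at $t=1$. Steps~2 and~3 are sound and more careful than the paper's sketch: the double-zero/uniqueness exclusion of an interior zero at $A_0$, the uniform control near $t=0$ from $h(0,A)=1$, and the extraction $t_k\to 1$.

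\textbf{The gap is the base case in Step~1.} Neither route you offer shows $h(t,0)>0$ for $t\in(0,1)$.
\begin{itemize}
\item Gauss's summation formula only gives the endpoint value $h(1,0)=\Gamma(n-2)/(\Gamma(1-a)\Gamma(1-b))$. This is positive for the parameters $a,b$ recorded below, though you only note that it is finite. It says nothing about zeros inside $(0,1)$. A sign change followed by a return to positive values at $t=1$ is compatible with the equation, since at a negative interior minimum $h''=-(A^2+\frac{n-3}{2t(1-t)})h>0$.
\item The $w$-route breaks at its last step. Property~4 of Lemma~\ref{cor_prop} only forbids positive interior minima, so $w$ may rise to a maximum and then decrease to a zero at some $t_1<1$. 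Property~3 is a statement about the point $t=1$ and cannot exclude such a zero. Incidentally, the value $w''(1)=w(1)/(n-2)$ you quote is inconsistent with Property~2, which gives $w''(1)=\frac{A^2+1/4}{n-4}\,w(1)$ for $n\ge 5$.
\end{itemize}
Without interior positivity at $A=0$ your admissible set may be empty, and the supremum argument never starts. The paper's proof has the same hole: from $h'(0,A)<0$ it simply asserts that $h$ is decreasing.

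\textbf{The fix follows from the hypergeometric identification you already made.} Take $c=1$, $a=\frac{-(n-3)+\sqrt{(n-3)(n-1)}}{2}\in(0,\tfrac12)$ and $b=\frac{-(n-3)-\sqrt{(n-3)(n-1)}}{2}<0$. Since $0<a<c$, Euler's integral representation gives
\[
h(t,0)={}_2F_1(a,b;1;t)=\frac{\sin\pi a}{\pi}\int_0^1 s^{a-1}(1-s)^{-a}(1-ts)^{-b}\,ds .
\]
The integrand is positive for $t\in[0,1]$, $s\in(0,1)$. It is also integrable at $t=1$, where it behaves like $(1-s)^{n-3}$ near $s=1$. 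Hence $h(\cdot,0)>0$ on all of $[0,1]$, including $h(1,0)>0$. Your compactness argument then gives positivity for small $A>0$, and the rest of your proof goes through.

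\textbf{A small correction in Step~3.} The exponents $0$ and $n-2$ at $t=1$ differ by an integer, so the exponent-$0$ Frobenius solution may contain a term $(1-t)^{n-2}\log(1-t)$. This is consistent with the blow-up of $h''$ at $t=1$ for $n=4$ noted in Lemma~\ref{cor_prop}. So that solution need not be analytic. The term still vanishes at $t=1$, and its coefficients depend analytically on $A$, so the joint continuity of $h$ on $[0,1]\times[0,M]$ that you need is unaffected.
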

\begin{proof}
The function $h(t,A)$ satisfies differential equation (\ref{the_sec_form1}). Using the boundary condition we get
$$
h(0,A) = H_C\left(\frac{n-3}{2} + i A ,iA(4-n),1,3-n,2iA,0\right)=1.
$$
Hence Property 1  from Lemma \ref{cor_prop} gives that the function  $h(t,A)$ is  decreasing (see Fig. \ref{sketch_h(n)}). 

\begin{figure}[h]
\begin{minipage}[h]{0.5\linewidth}
\center{\includegraphics[width=1\linewidth]{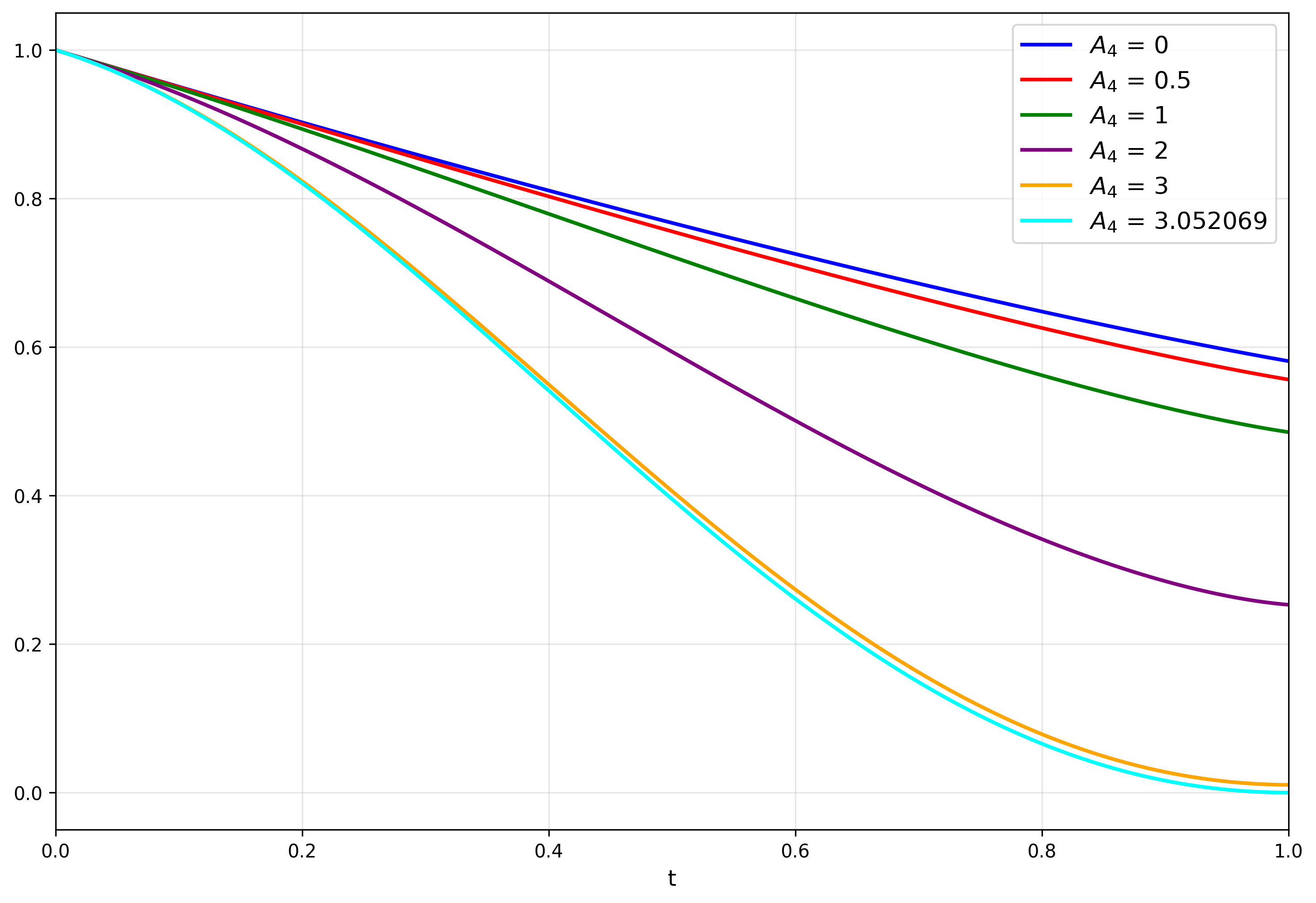}}  \\ a)
\end{minipage}
\hfill
\begin{minipage}[h]{0.5\linewidth}
\center{\includegraphics[width=1\linewidth]{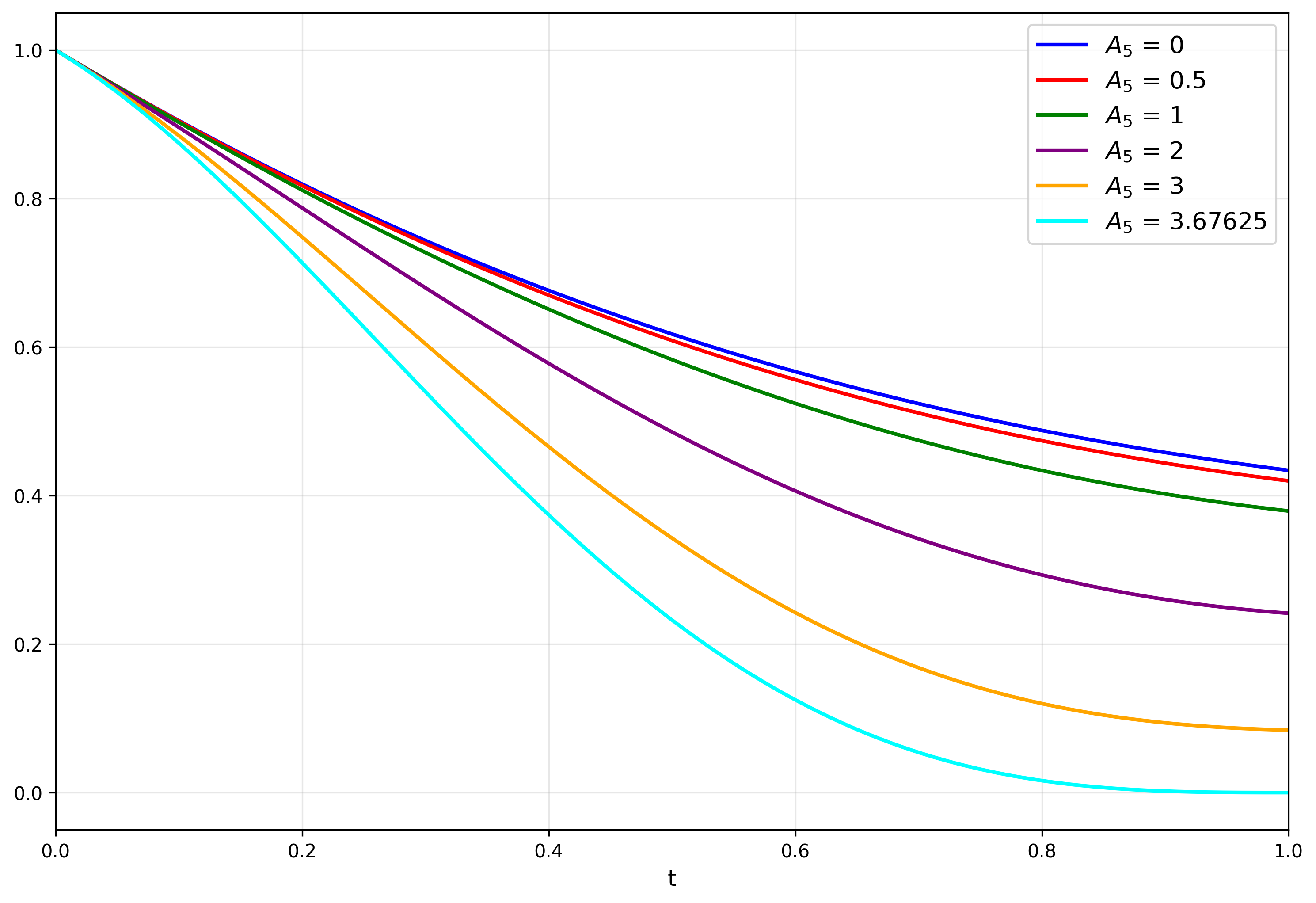}} b) \\
\end{minipage}
\vfill
\begin{minipage}[h]{0.5\linewidth}
\center{\includegraphics[width=1\linewidth]{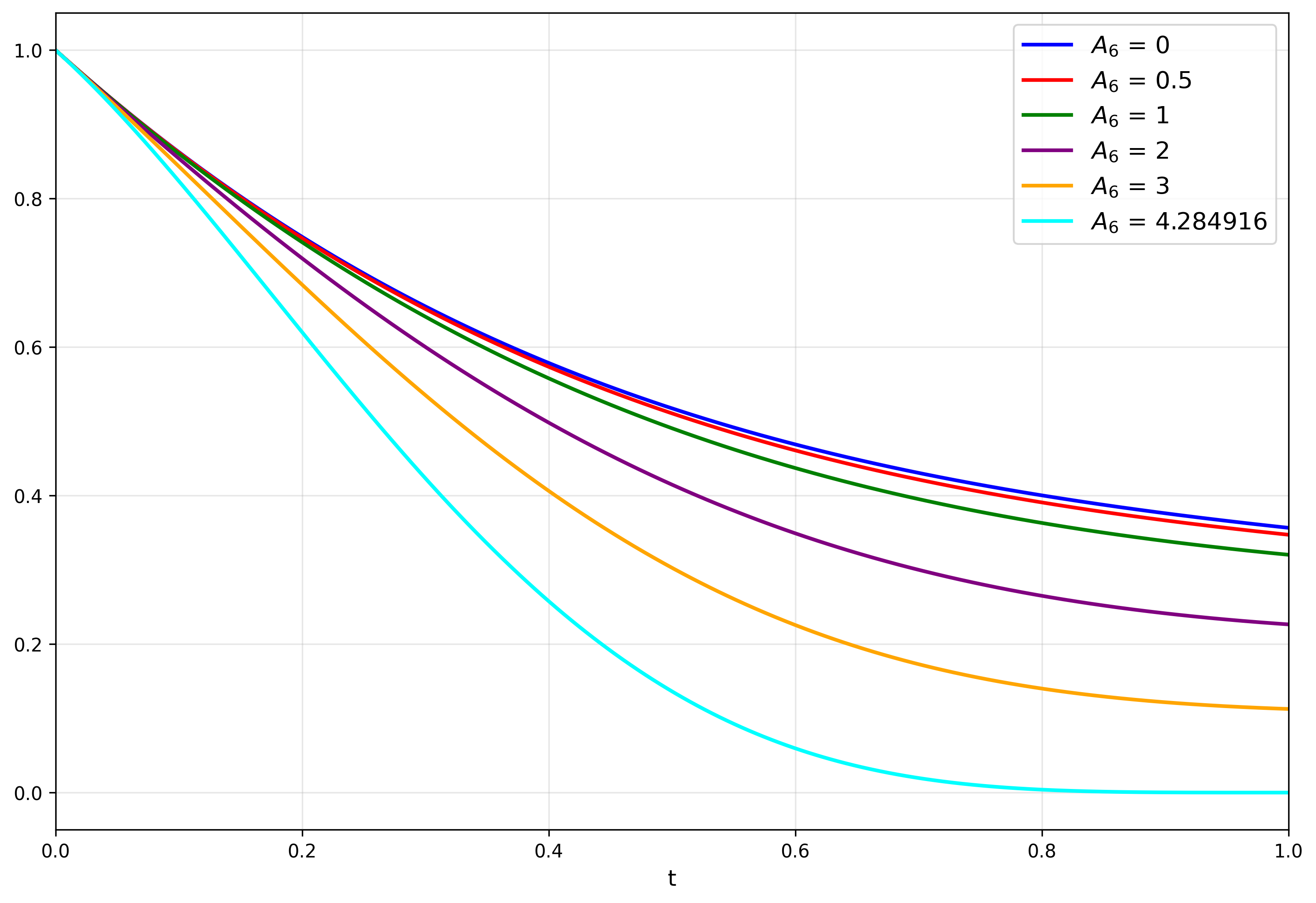}} c) \\
\end{minipage}
\hfill
\begin{minipage}[h]{0.5\linewidth}
\center{\includegraphics[width=1\linewidth]{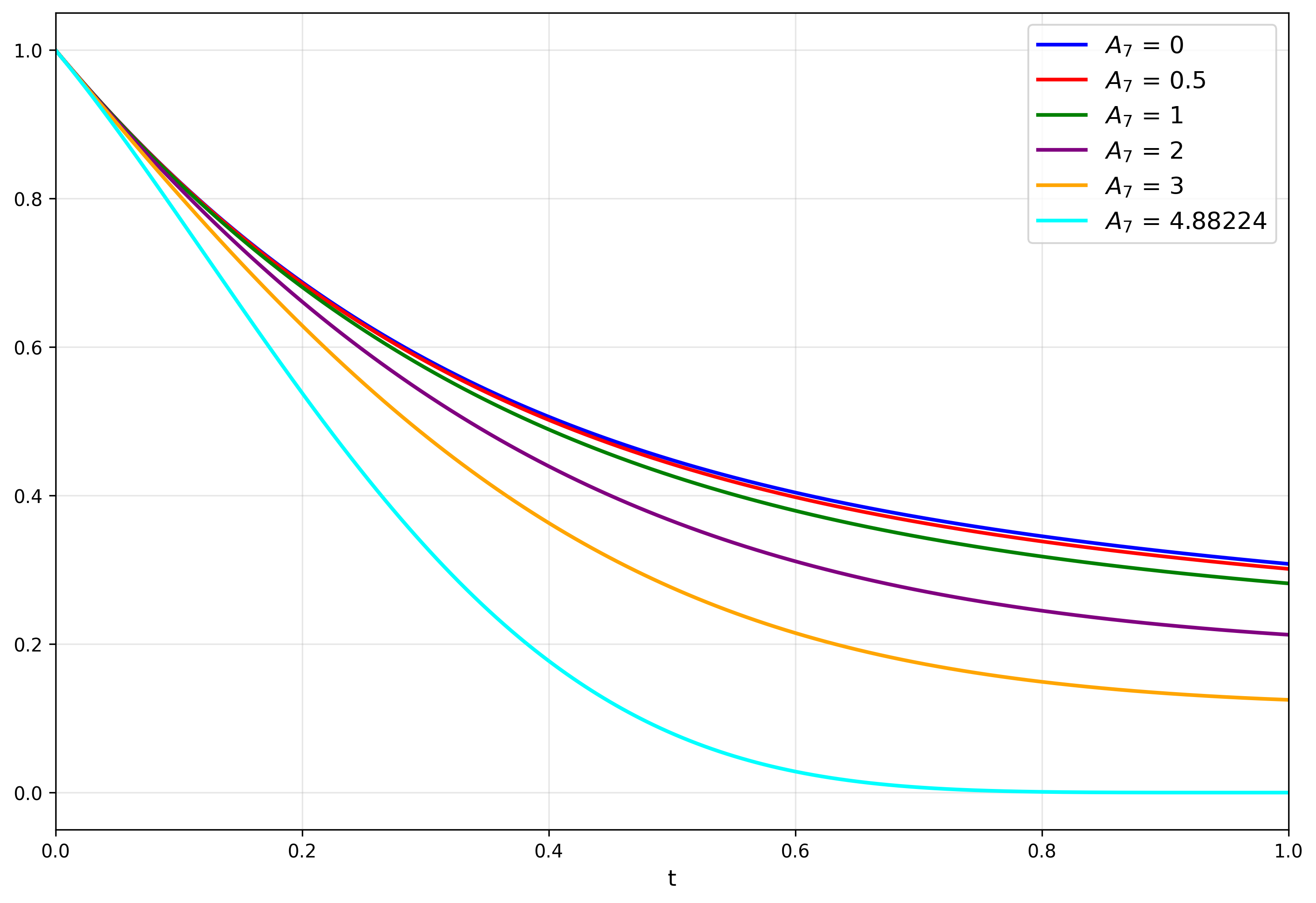}} d) \\
\end{minipage}
\caption{Graphs of $h(t,A)$: a) $n=4$, b) $n=5$, c) $n=6$, d) $n=7$.}
\label{sketch_h(n)}
\end{figure}
\bigskip
If the function is positive and has extremum at a point $t_0\in (0,1)$ then 
$$
h''(t_0) = -\left(A^2+\frac{n-3}{2t(1-t)}\right)h(t_0)<0
$$
and $t_0$ must be a maximum point. 

Above we have just proved  that $h(t,A)$ has at least one zero on (0,1). To have zeros the function must  cross the $x$-axis. Since the function $h(t,A)$ and its derivative are continuous functions then there must exist $A_0(n)$ such that $h(1,A_0(n))=0$. $A_0(n)$ is the first $A$ that give intersection of $h$ with $x$-axis.
\end{proof}

\begin{remark}
Using  Python cod we computed $h\left(1, \sqrt{j_{n/2-1}^2-1/4}\right)$ for $n=4,\ldots, 100$. We have 
$h\left(1, \sqrt{j_{n/2-1}^2-1/4}\right)<0$  for $n=4,\ldots, 100$. Consequently, there is  at least one zero in the open interval $$\left(0,  \sqrt{j_{n/2-1}^2-1/4}\right).$$
\end{remark}

\begin{corollary}\label{cor31}
The equation
$$
e^{iA}H_C\left(\frac{n-3}{2} + i A ,iA(4-n),1,3-n,2iA,1\right)=0
$$
has a unique real solution on $[0, A_0(n)]$.
\end{corollary}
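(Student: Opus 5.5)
Write $h(t,A)=e^{iAt}H_C(\ldots,t)$ as before. By Lemma~\ref{le2} it is the real solution of (\ref{the_sec_form1}) with $h(0,A)=1$. The equation in Corollary~\ref{cor31} is exactly $h(1,A)=0$. Existence of a root in $[0,A_0(n)]$ is already given by Lemma~\ref{le3}, since $h(1,A_0(n))=0$. So the work is uniqueness. My plan is to show that $A=A_0(n)$ is the only zero of $A\mapsto h(1,A)$ on $[0,A_0(n)]$. I would do this by proving $h(1,A)>0$ for every $A\in[0,A_0(n))$.

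\textbf{Positivity at the endpoint.} First, for $A<A_0(n)$ we know from Lemma~\ref{le3} that $h(\cdot,A)>0$ on $(0,1)$, so $h(1,A)\ge 0$. I need to exclude $h(1,A)=0$ for such $A$. Suppose $h(1,A)=0$. Property~1 of Lemma~\ref{cor_prop} gives $h'(1,A)=-\tfrac12 h(1,A)=0$. I would then argue that $h\equiv 0$ locally. At $t=1$ the local solutions behave like $y_1$ (regular, determined by its value at $1$) and $y_2\sim(1-t)^{1-\delta}=(1-t)^{n-2}$. The vanishing of $h(1)$ kills the $y_1$ component. So $h=C_2y_2$ vanishes to order $n-2\ge 2$, which is consistent with $h'(1)=0$; this means the local argument alone does not give a contradiction. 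I would therefore rather use the definition of $A_0(n)$ as the \emph{first} parameter at which $h(1,\cdot)$ vanishes (the last sentence of the proof of Lemma~\ref{le3}). Taking that definition literally, $h(1,A)\neq0$ for $A<A_0(n)$ by construction. Continuity in $A$ then gives $h(1,A)>0$ there, since $h(1,0)>0$.

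\textbf{Making the argument non-circular.} To avoid relying purely on the definition, I would add a monotonicity argument. Let $v=h_A=\partial_A h$. Differentiate (\ref{the_sec_form1}) in $A$, which gives the inhomogeneous equation $\mathcal{L}v=-2Ah$. Then apply the Wronskian/Green identity with $h$ against the weight $p(t)=t(1-t)^{3-n}$ that makes (\ref{the_sec_form}) self-adjoint. This yields
$$
\bigl[p(h v'-h' v)\bigr]_0^1=2A\int_0^1 t\,(1-t)^{4-n}\,h^2\,dt>0 .
$$
Evaluating the boundary term at $t=1$ with Property~1 of Lemma~\ref{cor_prop} should produce a sign relation between $\partial_A h(1,A)$ and $h(1,A)$. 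In particular, at any zero $A^*$ of $h(1,\cdot)$ with $h(\cdot,A^*)>0$ on $(0,1)$, it should give $\partial_A h(1,A^*)<0$. So every such zero is a strict down-crossing. Since $h(1,0)>0$, there can then be at most one such crossing before positivity inside $(0,1)$ fails, which happens only beyond $A_0(n)$ by Lemma~\ref{le3}. Hence the root on $[0,A_0(n)]$ is unique.

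\textbf{Main obstacle.} The main obstacle is this boundary computation at $t=1$. The weight $p$ blows up like $(1-t)^{3-n}$, so the boundary term must be handled by the local expansions $y_1,y_2$ near $1$ and Property~2 of Lemma~\ref{cor_prop}. It also has to be checked that the singular pieces cancel. If that fails, I would fall back on the definition of $A_0(n)$ as the first crossing together with continuity of $h(1,A)$ in $A$.
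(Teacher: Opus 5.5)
Your proof is correct, and its decisive step is the paper's own. Corollary~\ref{cor31} has no separate proof there; it is read off from the last sentence of the proof of Lemma~\ref{le3}, where $A_0(n)$ is \emph{chosen} as the first $A$ with $h(1,A)=0$, so uniqueness on $[0,A_0(n)]$ holds by construction. You are also right that the local analysis at $t=1$ cannot replace this, since a zero of $h(1,\cdot)$ only means that $h=C_2y_2$ vanishes to order $n-2$.

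Your Wronskian argument is a genuinely different route, and it does work, but only after some corrections. Set $W(f,g)=fg'-f'g$ and write (\ref{the_sec_form1}) as $(ph')'+p\bigl(A^2+\frac{n-3}{2t(1-t)}\bigr)h=0$ with $p=t(1-t)^{3-n}$. Differentiating in $A$ gives $[pW(h,v)]_0^1=-2A\int_0^1 t(1-t)^{3-n}h^2\,dt$: the weight is $p$ itself, not $t(1-t)^{4-n}$, and the sign is negative. This identity is usable only at a zero $A^*$ of $h(1,\cdot)$; otherwise $h^2(1-t)^{3-n}$ is not integrable near $1$ for $n\ge4$, so it gives nothing for general $A$. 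The term at $t=0$ vanishes because $p(0)=0$ and $h(0,A)=1$, $h'(0,A)=-\frac{n-3}{2}$ do not depend on $A$. At $t=1$, Property~1 of Lemma~\ref{cor_prop} is of no help. Instead normalize $y_1(1)=1$ and $y_2\sim(1-t)^{n-2}$, so that $h(1,A)=C_1(A)$. At $A^*$ write $h=C_2y_2$ and $v=C_1'(A^*)y_1+C_2'y_2+C_2\partial_Ay_2$. Abel's identity gives $pW(y_2,y_1)\equiv n-2$, while $pW(y_2,\partial_Ay_2)=O((1-t)^{n-1})$; the logarithmic part of $y_1$ is harmless. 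Hence
\[
(n-2)\,C_2(A^*)\,\partial_Ah(1,A^*)=-2A^*\int_0^1 t(1-t)^{3-n}h^2(t,A^*)\,dt<0,
\]
with $C_2(A^*)>0$ because $h(\cdot,A^*)>0$. Positivity on $(0,1)$ forces $h(1,A)\ge0$ for all $A\le A_0(n)$, so a strict down-crossing at some $A^*\in(0,A_0(n))$ is impossible. The case $A^*=0$ must be excluded separately, and your claim $h(1,0)>0$ needs proof. It follows from Gauss's formula: $h(\cdot,0)={}_2F_1(a,b;1;\cdot)$ with $a+b=3-n$ and $ab=-\frac{n-3}{2}$, so $0<a<\frac12$, $b<0$, and $h(1,0)=\Gamma(n-2)/(\Gamma(1-a)\Gamma(1-b))>0$.

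The trade-off is as follows. The paper's route is one line but tautological, with all content pushed into the existence assertion of Lemma~\ref{le3}. Yours needs the local theory at the singular point $t=1$, but it proves more. First, uniqueness holds under the weaker reading of Lemma~\ref{le3} (only positivity for $A\le A_0(n)$ and $h(1,A_0(n))=0$). Second, $h(1,\cdot)$ changes sign transversally at $A_0(n)$. Together with $h(0,A)=1$, this shows that $h(\cdot,A)$ acquires an interior zero just beyond $A_0(n)$. So $A_0(n)$ is exactly the positivity threshold needed in Theorem~\ref{th2}, and the root is a genuine sign change, which is what the bracketing solver of Appendix~B presupposes.
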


\begin{remark}
We need only existence at least one zero of $h(t,A)$ in $(0,1)$. In general, there are  infinitely many zeros. See for example Fig. \ref{Fig5}. 
\end{remark}
\begin{figure}[h]
\begin{center}
\includegraphics[scale=0.42]{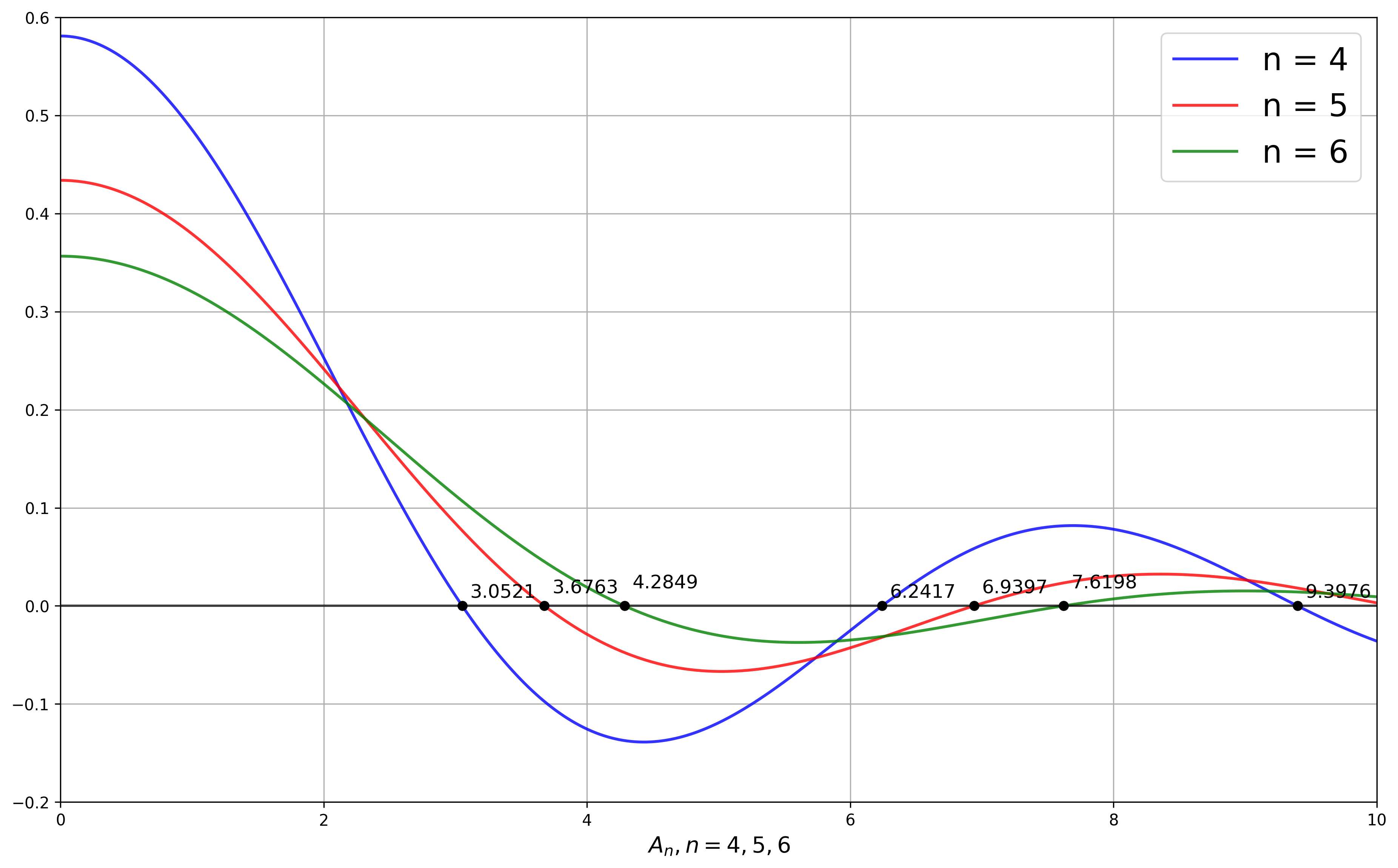}
\caption{Relations of different roots of $e^{iA}H_C\left(\frac{n-3}{2} + i A ,iA(4-n),1,3-n,2iA,1\right)$ in the case $n=4,5,6$. }
\label{Fig5}
\end{center}
\end{figure}

\begin{remark}\label{rem1}
As we remark in Section \ref{sec22} in the neighbourhood of the point $1$, the confluent Heun function decomposes into a linear combination of two independent local solutions:
$$
H_C(z) = C_1 \cdot y_1(z) + C_2 \cdot y_2(z).
$$
When we substitute $1$ in to the $H_C(z)$ then we get equation 
$$
H_C(1) = C_1 \cdot y_1(1) + C_2 \cdot y_2(1)=C_1 y_1(1) = 0
$$
that depend only on $C_1$  and we search $A$ such that $C_1=0$.
\end{remark}

\subsection{Computation of zeros}\label{sec3.4}
Solutions of the Heun Confluent equation can be contracted as series with three-term recurrence relation for the expansion  coefficients \cite{Ronveaux}. We will use power series in a neighbourhood of the regular singular point $0$ to compute $A_0(n)$.  

Suppose that 
$$
h(t, A)=\sum_{k=0}^{\infty}a_kt^{k}.
$$
As we showed above in Section \ref{sec3.2} $a_0=1$, $a_1=(3-n)/2$. For all  $k\geq 2$ we get
$$
a_{k+1} = \frac{1}{(k+1)^2} \left[ \left(k^2 - (n-3)k - \frac{n-3}{2}\right) a_k - A^2 a_{k-1} + A^2 a_{k-2} \right].
$$

To find $A_0(n)$ we must solve the equation $h(1,A) =0$. Using these  three-term recurrence relations for the expansion  coefficients, we implemented Python code (see Appendix B) that allows us to calculate 
$$
h_N(t, A)= \sum\limits_{k=0}^{N} a_k t^k
$$
for any natural $N\in \mathbb{N}$. We can easily calculate the value of $h_N(t, A)$ at $t=1$ with any given precision. Consequently, we have the equation 
$$
 h_N(1, A)= 0
$$
for any given $N$, where $ h_N(1, A)$ is a polynomial with variable $A$. 

In \cite{Ga_Nas}, we improved known lower estimates of the
Brezis-Marcus constants. Thus we have  bilateral estimates of the sharp constants $A_0(n)$ for any natural $n$ (see Table \ref{tab:my_label}). Combining  them with numerical methods like the golden section method, the bisection  method or Newton method we could compute the solutions of the   above equation. See Appendices A-D for more details. 

Note we solve polynomial equations. Since the degree is equal to $N$, we will have at most $N$ roots. It is easy to take minimum of them by standard algorithms. This is avoid us to prove that in the segment there is only one solution. We could approximate $h(t,A)$ by $h_N(t,A)$  with any given precision. Hence, an increase $N$ will not change the solutions much.

\section{Main Results}

\subsection{From multidimensional inequalities to one-dimensional ones}

The following lemma shows that to prove the conjecture, it is enough to prove one dimensional inequalities.  Since the best constant $c(n)$ does not depend on linear transformations, it suffices to consider the case $x_0 = 0$ and $\rho = 1$ (see \cite{AW_Ball} for more information). Let $B_n^0$ be the $n$-dimensional unite ball:
\begin{equation*}
B_n^0=\{x\in\mathbb{R}^n:|x|<1\}.  
\end{equation*}

\begin{lemma}\label{le4} If for some $n\geq 2$ and for any continuously differentiable  real-valued  function $f$ such that  $f(0)=0$ and  $f'^2(t)(1-t)^{n-1}\in L_1(0,1)$ the one dimensional inequality  holds
\begin{equation}\label{le_f1}
\int\limits_0^1f'^2(t)(1-t)^{n-1}dt\geq\frac{1}{4}\int\limits_0^1\frac{f^2(t)}{t^2}(1-t)^{n-1}dt+C(n)\int\limits_0^1f^2(t)(1-t)^{n-1}dt,
\end{equation}
 then  for any $g\in C_0^1(B_n^0)$ the multidimensional inequality 
\begin{equation}\label{le_f2}
    \int\limits_{B_n^0}|\nabla g(x)|^2dx \geq \frac{1}{4}\int\limits_{B_n^0}\frac{g^2(x)}{(1-|x|)^2}dx+ C(n)\int\limits_{B_n^0}g^2(x)dx
\end{equation}
is valid. Moreover, if  the constant $C(n)$ is sharp in the one dimensional inequality (\ref{le_f1}),  then it is also sharp  in the multidimensional inequality  (\ref{le_f2}).
\end{lemma}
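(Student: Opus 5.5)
We reduce the multidimensional inequality (\ref{le_f2}) to (\ref{le_f1}) by integrating along rays issuing from points of the unit sphere towards the centre, in the spirit of \cite{AW_Ball}. Write $x=(1-t)\sigma$ with $\sigma\in S^{n-1}$ and $t\in(0,1)$, so that $t=1-|x|=\delta(x)$ is the distance to the boundary. Then $dx=(1-t)^{n-1}\,dt\,d\sigma$. For $g\in C_0^1(B_n^0)$ (by density it suffices to take $g$ smooth with compact support) and fixed $\sigma$, put $f_\sigma(t)=g((1-t)\sigma)$. The support condition gives $f_\sigma(0)=0$. Moreover $f_\sigma$ is continuously differentiable on $[0,1)$, and $f_\sigma'^2(t)(1-t)^{n-1}$ is integrable because $g$ is smooth near the origin. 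Hence (\ref{le_f1}) applies to each $f_\sigma$.

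The key pointwise estimate is $|\nabla g(x)|^2\geq (\partial g/\partial r)^2=f_\sigma'^2(t)$: the gradient dominates its radial component. Therefore
$$\int_{B_n^0}|\nabla g|^2dx\geq\int_{S^{n-1}}\int_0^1 f_\sigma'^2(t)(1-t)^{n-1}\,dt\,d\sigma.$$
Applying (\ref{le_f1}) to each $f_\sigma$ and integrating over $\sigma$ reproduces exactly $\frac14\int g^2/(1-|x|)^2dx+C(n)\int g^2dx$ in polar coordinates. This proves (\ref{le_f2}).

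For sharpness, suppose $C(n)$ is the best constant in (\ref{le_f1}). Then for every $\varepsilon>0$ there is an admissible $f$ with
$$\int f'^2w-\tfrac14\int f^2t^{-2}w<(C(n)+\varepsilon)\int f^2w,\qquad w=(1-t)^{n-1}.$$
We take the radial test function $g(x)=f(1-|x|)$. For radial functions the inequality $|\nabla g|=|f'|$ holds with equality, so the Rayleigh quotient of $g$ in (\ref{le_f2}) coincides with that of $f$ in (\ref{le_f1}), up to the common factor $|S^{n-1}|$. Hence no constant larger than $C(n)$ can work in (\ref{le_f2}).

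The main obstacle is the admissibility of $g$. We must have $g\in C_0^1(B_n^0)$, with finite Dirichlet integral and the required regularity at the centre $t=1$, where $g$ might fail to be differentiable. To handle this, we first approximate $f$ by functions that vanish near $t=0$ and are constant near $t=1$; this can be done by cutoffs, using that the weight $(1-t)^{n-1}$ makes the endpoint $t=1$ negligible for $n\geq2$. We then check that the three integrals converge under this approximation, so the quotient changes by less than $\varepsilon$. After this modification $g$ is smooth at the origin and compactly supported, and the argument above applies.
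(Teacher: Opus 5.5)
Your proposal is correct and follows the paper's own argument: polar coordinates with $|\nabla g|\ge|\partial g/\partial r|$, the one-dimensional inequality applied on each ray to $f(t)=g((1-t)\theta)$ and integrated over $\mathbb{S}^{n-1}$, and radial test functions $g(x)=f(1-|x|)$ for sharpness. Your extra cutoff step, which makes $f$ vanish near $t=0$ and be constant near $t=1$ so that $g$ is genuinely admissible, fills in a point the paper passes over. It works because Hardy's inequality gives $\int_0 f^2t^{-2}\,dt<\infty$, and $f'^2(1-t)^{n-1}\in L_1$ gives $f(1-\delta)^2\delta^n\to 0$.
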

\begin{proof}
We will use the spherical coordinates with $x = r\theta$, $|x| = r$ and the differential element $dx = r^{n-1}drd\theta$. Inequality (\ref{le_f2}) follows from the equivalent in the spherical coordinates inequality 
\begin{equation*}
\int\limits_{\mathbb{S}^{n-1}}d\theta \int\limits_0^1\left(|\nabla g(r,\theta)|^2-\frac{g^2(r,\theta)}{4(1-r)^2}-C(n)g^2(r,\theta)\right)r^{n-1}dr\geq 0.
\end{equation*}
Here by $\mathbb{S}^{n-1}$ the unite sphere is denoted. 

Taking into account that
$$
|\nabla g(r,\theta)|\geq \left|\frac{\partial g(r,\theta)}{\partial r}\right|,
$$
we get
\begin{equation*}
\int\limits_0^1\left(\left|\frac{\partial g(r,\theta)}{\partial r}\right|^2-\frac{g^2(r,\theta)}{4(1-r)^2}-C(n)g^2(r,\theta)\right)r^{n-1}dr\geq 0.
\end{equation*}

Consequently, the change $r = 1-t$ of variables gives  
\begin{equation*}
\int\limits_0^1f'^2(t)(1-t)^{n-1}dt\geq\frac{1}{4}\int\limits_0^1\frac{f^2(t)}{t^2}(1-t)^{n-1}dt+
C(n)\int\limits_0^1f^2(t)(1-t)^{n-1}dt,
\end{equation*}
where $f \in C^1[0,1]$ is defined by the equation $f(t)=g(1-t,\theta)$ for any fixed $\theta \in \mathbb{S}^{n-1}$. 
The last inequality follows from one dimensional inequality (\ref{le_f1}).

To prove that the constant $C(n)$ is sharp in the both inequalities, we use radial functions. These are functions of the form $g(x) =f(r)$, where $ f\in C^1[0,1]$ and such that $f(0)=0$. For such functions $C(n)$ being sharp in (\ref{le_f2}) is easily seen to be equivalent to the second constant $C(n)$ in (\ref{le_f1}) being sharp. 

This completes the proof of Lemma \ref{le4}.
\end{proof}

\subsection{The case $n=2$}

Our main result is the following theorem about the sharp constant $c(2) =\kappa$.

\begin{theorem} \label{th1}
Let $f$ be a continuously differentiable  real-valued  function such that  $f(0)=0$ and  $f'^2(t)(1-t) \in L_1(0,1)$. Then the inequality holds
\begin{equation*}
\int\limits_0^1f'^2(t)(1-t)dt\geq\frac{1}{4}\int\limits_0^1\frac{f^2(t)}{t^2}(1-t)dt+\kappa^2\int\limits_0^1f^2(t)(1-t)dt.
\end{equation*}
Here $\kappa=1.718347...$ is the solution of the equation 
$$
\lambda_0(0,\kappa/2)=-1/2.
$$
\end{theorem}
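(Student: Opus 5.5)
The plan is to use the ground-state substitution built on Lemma \ref{le1}. Set $A=\kappa$ and
$$
y(t)=\sqrt{t}\,S^1_{0,0}\!\left(\tfrac{\kappa}{2},2t-1\right).
$$
By Lemma \ref{kappa_comp}, $\lambda_0(\kappa/2,0)=-1/2$, so Lemma \ref{le1} shows that $y$ solves
$$
((1-t)y')'+(1-t)\left(\frac{1}{4t^2}+\kappa^2\right)y=0
$$
on $(0,1)$. This is exactly the Euler--Lagrange equation of the quadratic form in Theorem \ref{th1}. Since $\nu=0$ and $m=0$, Property 2 says that $S^1_{0,0}$ has no zeros on $[-1,1]$. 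Hence $y>0$ on $(0,1]$, $y(t)\sim c\sqrt{t}$ as $t\to0$, and $y(1)$ is finite and nonzero.

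Next I put $v=y'/y$. Then $v$ satisfies the Riccati equation
$$
((1-t)v)'+(1-t)v^2=-(1-t)\left(\frac{1}{4t^2}+\kappa^2\right).
$$
For $f$ as in the theorem, first take $f$ to vanish near $0$ and be smooth up to $1$. Expanding the square and integrating by parts, using $2ff'v=(f^2)'v$, gives
$$
0\le\int_0^1(f'-vf)^2(1-t)\,dt=\int_0^1 f'^2(1-t)\,dt-\int_0^1 f^2\Big[((1-t)v)'+(1-t)v^2\Big]dt-\Big[(1-t)vf^2\Big]_0^1 .
$$
By the Riccati equation, the bracket inside the last integral equals $-(1-t)\left(\frac{1}{4t^2}+\kappa^2\right)$. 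So the inequality of the theorem follows once the boundary term vanishes.

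\textbf{Boundary terms.} This is the step needing care. At $t=1$ the factor $(1-t)$ kills the term, because $v(1)=y'(1)/y(1)$ is finite: $y(1)\ne0$ and $S^1_{0,0}$ is entire (Property 2). At $t=0$ we have $v\sim\frac{1}{2t}$, so $(1-t)vf^2\sim f^2/(2t)$. For general $f$ with $f(0)=0$ and $f\in C^1$, this gives $f^2/t=O(t)\to0$. I would nonetheless do the computation on $[\varepsilon,1-\varepsilon]$ and pass to the limit, dropping the nonnegative square in general. A $C^1$ function with $f(0)=0$ also makes $\int f^2/t^2(1-t)$ finite. If only $f'^2(1-t)\in L_1$ is assumed near $t=1$, the boundary term $(1-\varepsilon)v(1-\varepsilon)f^2(1-\varepsilon)\,\varepsilon$ must still tend to $0$. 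I expect this from the weighted integrability, using $f^2(1-\varepsilon)\le C\log(1/\varepsilon)$, which follows by Cauchy--Schwarz. Alternatively, one can argue by density/truncation of $f$ near $1$.

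\textbf{Sharpness (optional for the stated inequality).} Take $f_\varepsilon=y\cdot\phi_\varepsilon$, with a cutoff $\phi_\varepsilon$ near $0$ of logarithmic type, as in the standard Hardy arguments. The main obstacle is controlling the singular behaviour $y\sim\sqrt t$ at $0$ in the boundary term and in this approximation, together with rigorously justifying positivity of the ground state $S^1_{0,0}$ on the closed interval. For positivity I would rely on Property 2 and the fact that the lowest eigenvalue, $\nu=0$, is the one matched with $-1/2$, as fixed by Lemma \ref{kappa_comp}.
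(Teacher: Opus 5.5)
Your proof is correct and is essentially the paper's argument. The paper also takes $f_0(t)=\sqrt{t}\,S^1_{0,0}(\kappa/2,2t-1)$, expands $\int_0^1(1-t)\bigl(f'-\frac{f_0'}{f_0}f\bigr)^2dt\ge0$, integrates by parts, discards the boundary terms and invokes Lemma~\ref{le1}; your truncation to $[\varepsilon,1-\varepsilon]$ and the bound $f^2(1-\varepsilon)\le C\log(1/\varepsilon)$ handle the endpoint $t=1$ more carefully than the paper does. One sign slip needs fixing: after integrating by parts the middle term is $+\int_0^1 f^2\bigl[((1-t)v)'+(1-t)v^2\bigr]dt$, not minus, and only with that sign does the Riccati equation turn it into $-\int_0^1 f^2(1-t)\bigl(\frac{1}{4t^2}+\kappa^2\bigr)dt$.
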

\begin{proof}
For a given $f$, satisfying the conditions, we consider the function $f_0$ defined by 
$$
f_0(t) = \sqrt{t} S_{0,0}^1\left(\frac{\kappa}{2},2t-1\right),
$$
where  $S_{0,0}^1$ is an  angular spheroidal function (see Fig. \ref{Fig6}) and $\kappa$ is the solution of the equation $$\lambda_0(0,\kappa/2)=-1/2.$$ 

Consider the integral
$$
A(f) = \int\limits_0^1(1-t)\left(f'(t)-\frac{f_0'(t)}{f_0(t)}f(t)\right)^2dt\geq 0.
$$
Property 2 from Section \ref{sec21} implies that the function $f_0$ has no zeros on the unite interval $(0,1)$. Thus the integral is well defined. Moreover, for sufficiently small $t$  we have the following behaviour $f_0(t) \sim \sqrt{t}$ and, as consequence, $f'_0\not\in L_2(0,1)$.

\begin{figure}[h]
\begin{center}
\includegraphics[scale=0.65]{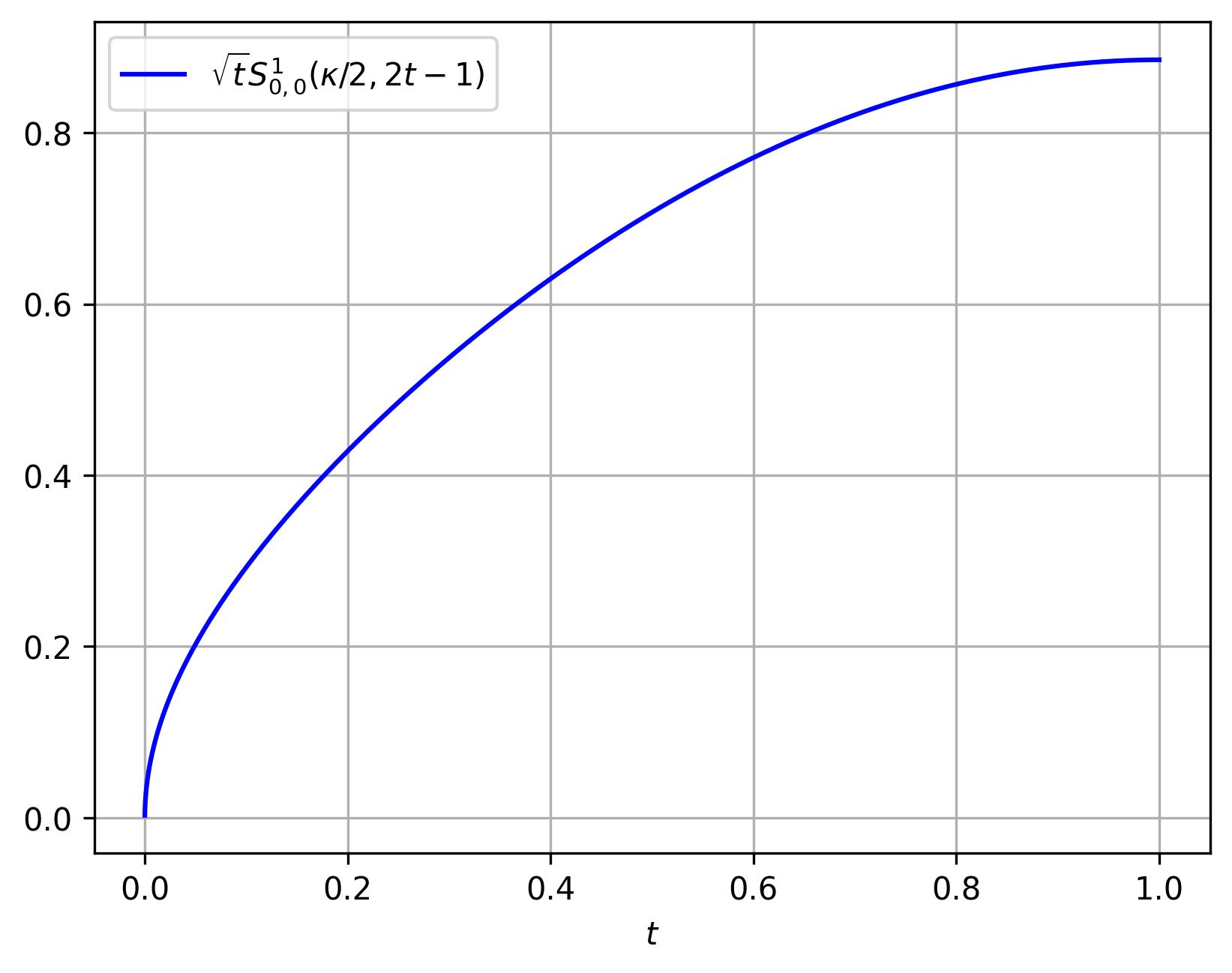}
\caption{Graphic of $\sqrt{t}S_{0,0}^1(\kappa/2,2t-1)$}
\label{Fig6}
\end{center}
\end{figure}

Straightforward computations using integration by parts give
$$
A(f) = \int\limits_0^1(1-t)f'^2(t)dt -\int\limits_0^1\frac{f_0'(t)}{f_0(t)} (1-t) df^2(t)+\int\limits_0^1\left(1-t\right)\frac{f_0'^2(t)}{f_0^2(t)}f^2(t)(1-t)dt=
$$
$$
\int\limits_0^1(1-t)f'^2(t)dt + \int\limits_0^1f^2(t)\left(\frac{f_0''(t)}{f_0(t)}-\frac{1}{1-t}\frac{f'_0(t)}{f_0(t)}\right)dt-
$$
$$
\lim_{t\to 1-}\frac{f'_0(t)}{f_0(t)} (1-t) f^2(t)+\lim_{t\to 0+}\frac{f'_0(t)}{f_0(t)} (1-t)f^2(t).
$$

The condition $f'\in L_2(0, 1)$ via the Cauchy-Schwarz inequality
$$
f^2(t) \leq \left(\int\limits_0^t|f'(\tau)|d\tau\right)^2\leq t\int\limits_0^t|f'(\tau)|^2d\tau
$$
gives $f^2(t)/t\to 0$ as $t\to 0^+$. 

Consequently, using the equality 
$$
\frac{f_0'(t)}{f_0(t)}= \frac{1}{2t}+\frac{(S_{0,0}^1\left(\frac{\kappa}{2},2t-1\right))'}{S_{0,0}^1\left(\frac{\kappa}{2},2t-1\right)},
$$
we get 
$$
\lim_{t\to 0+}\frac{f'_0(t)}{f_0(t)} (1-t)f^2(t) = 0, \quad \lim_{t\to 1-}\frac{f'_0(t)}{f_0(t)} (1-t) f^2(t)=0.
$$
Hence
$$
0<A(f) =\int\limits_0^1(1-t)f'^2(t)dt + \int\limits_0^1f^2(t)\left(\frac{f_0''(t)}{f_0(t)}-\frac{1}{1-t}\frac{f'_0(t)}{f_0(t)}\right)(1-t)dt.
$$
Applying Lemma \ref{le1}, we obtain
$$
A(f) =\int\limits_0^1(1-t)f'^2(t)dt - \int\limits_0^1f^2(t)\left(\frac{1}{4t^2}+\kappa^2\right)(1-t)dt>0.
$$
This concludes the proof Theorem \ref{th1}.
\end{proof}

In the next statement we prove that the constant $\kappa$ is sharp.

\begin{proposition}\label{prop1} For any $\varepsilon_0 >0$ there exists a function $g\in C^1[0,1]$ such that $g(0) = 0$ and $g'\in L^2[0,1]$, and 
\begin{equation*}
\int\limits_0^1g'^2(t)(1-t)dt\geq\frac{1}{4}\int\limits_0^1\frac{g^2(t)}{t^2}(1-t)dt+(\kappa^2+\varepsilon_0)\int\limits_0^1g^2(t)(1-t)dt.
\end{equation*}
\end{proposition}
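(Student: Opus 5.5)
The statement, as worded, asks only for the \emph{existence} of one admissible $g$ for which the inequality with the enlarged constant $\kappa^2+\varepsilon_0$ holds. My plan is to produce such a $g$ explicitly, in a nontrivial form; the zero function satisfies the claim vacuously and I mention it only as a degenerate case. The idea is that a rapidly oscillating $g$ has a Dirichlet term that grows like the square of the frequency. The two terms on the right grow at most linearly in the frequency, so for large frequency the inequality holds with any fixed constant, in particular with $\kappa^2+\varepsilon_0$.

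\textbf{Construction.} For $k\in\mathbb{N}$ I take $g_k(t)=\sin(k\pi t)$. Then $g_k\in C^1[0,1]$, $g_k(0)=0$ and $g_k'\in L^2[0,1]$, so $g_k$ is admissible. The three integrals are estimated as follows.

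\emph{Left-hand side.} We have
$$
\int\limits_0^1 g_k'^2(t)(1-t)\,dt=k^2\pi^2\int\limits_0^1\cos^2(k\pi t)(1-t)\,dt .
$$
Writing $\cos^2=\tfrac12(1+\cos 2k\pi t)$ and integrating the oscillatory part by parts gives $\int_0^1\cos^2(k\pi t)(1-t)\,dt=\tfrac14+O(k^{-2})$. Hence the left-hand side equals $\tfrac{k^2\pi^2}{4}+O(1)$.

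\emph{Hardy term.} On $(0,1/k)$ I use $|\sin(k\pi t)|\le k\pi t$, which gives a contribution at most $k\pi^2$. On $(1/k,1)$ I use $\sin^2\le 1$, so the contribution is at most $\int_{1/k}^1 t^{-2}\,dt\le k$. Altogether
$$
\frac14\int\limits_0^1\frac{g_k^2(t)}{t^2}(1-t)\,dt\le \frac{(\pi^2+1)k}{4}.
$$

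\emph{Last term.} Since $g_k^2\le 1$, the last integral is at most $\tfrac12$, so the last term is at most $\tfrac12(\kappa^2+\varepsilon_0)$.

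\textbf{Conclusion.} The required inequality therefore follows from
$$
\frac{k^2\pi^2}{4}-C\ \ge\ \frac{(\pi^2+1)k}{4}+\frac{\kappa^2+\varepsilon_0}{2}
$$
for an absolute constant $C$. This holds for all $k\ge k_0(\varepsilon_0)$, and I take $g=g_{k_0}$.

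The main point needing care is the Hardy term near $t=0$. A crude bound there could grow like $k^2$ and compete with the left-hand side. The splitting at $t=1/k$ is what shows this term is only $O(k)$, and I expect that to be the one step requiring attention. The remaining estimates are elementary bounds on trigonometric integrals.
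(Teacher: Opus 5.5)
Your estimates are correct for the inequality exactly as printed; the left side is in fact exactly $k^2\pi^2/4$. But, as you noticed yourself, that statement is vacuous: $g\equiv 0$ already satisfies it. Your argument also never uses the value of $\kappa$, so it works with $\kappa^2+\varepsilon_0$ replaced by any constant at all. A statement like that cannot be what the proposition is for.

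The sentence before it announces that it proves $\kappa$ is \emph{sharp} in Theorem~\ref{th1}. The paper's proof sets $D$ equal to the right-hand side minus the left-hand side and concludes $D>0$, i.e.\ it establishes the \emph{reverse} strict inequality. The $\geq$ in the display is a misprint. The intended claim is that for every $\varepsilon_0>0$ some admissible $g$ \emph{violates} the inequality with constant $\kappa^2+\varepsilon_0$. Your proposal does not address this, and its mechanism points the wrong way. Oscillating functions make the quotient
\[
\frac{\int_0^1 g'^2(1-t)\,dt-\frac{1}{4}\int_0^1 g^2t^{-2}(1-t)\,dt}{\int_0^1 g^2(1-t)\,dt}
\]
arbitrarily large, whereas sharpness needs functions pushing it down to $\kappa^2$.

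The missing idea is a near-extremal family built from the ground state used in the proof of Theorem~\ref{th1}: $f_0(t)=\sqrt{t}\,S(t)$ with $S(t)=S^1_{0,0}(\kappa/2,2t-1)$. Since $f_0\sim c\sqrt{t}$ at $0$, we have $f_0'\notin L^2$, so you regularize: $g_\varepsilon=t^{\varepsilon/2}f_0$. Then $g_\varepsilon'-\frac{f_0'}{f_0}g_\varepsilon=\frac{\varepsilon}{2}t^{\varepsilon/2-1}f_0$. Apply the ground-state identity from that proof. Its boundary terms vanish, because $(1-t)\frac{f_0'}{f_0}g_\varepsilon^2=O(t^{\varepsilon})$ at $0$ and the factor $1-t$ kills it at $1$. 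This gives
\[
\int_0^1 g_\varepsilon'^2(1-t)\,dt-\frac{1}{4}\int_0^1\frac{g_\varepsilon^2}{t^2}(1-t)\,dt-\kappa^2\int_0^1 g_\varepsilon^2(1-t)\,dt=\frac{\varepsilon^2}{4}\int_0^1 t^{\varepsilon-1}S^2(t)(1-t)\,dt\le\frac{\varepsilon}{4}\max_{[0,1]}S^2 .
\]
On the other hand, $\int_0^1 g_\varepsilon^2(1-t)\,dt\ge\int_0^1 t^2S^2(t)(1-t)\,dt>0$ for $\varepsilon\le 1$. So for fixed $\varepsilon_0$ and $\varepsilon$ small, the inequality with constant $\kappa^2+\varepsilon_0$ fails for $g_\varepsilon$.

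Strictly, $g_\varepsilon$ is $C^1$ only on $(0,1]$, though $g_\varepsilon'\in L^2$. Replacing it on $[0,\delta]$ by a $C^1$ interpolant vanishing at $0$ changes all three integrals by $o(1)$ as $\delta\to 0$, so this is harmless. This ground-state argument is what you need to supply; the oscillating construction does not contribute to it.
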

\begin{proof} Consider the function  $g_\varepsilon$ defined by
$$
g_\varepsilon(t) =  t^{\frac{1+\varepsilon}{2}} S_{0,0}^1\left(\frac{\kappa}{2}, 2t-1\right) = t^{\frac{\varepsilon}{2}}f_0(t).
$$
and the following difference
\begin{equation*}
D = \frac{1}{4}\int\limits_0^1\frac{g_\varepsilon^2(t)}{t^2}(1-t)dt+(\kappa^2+\varepsilon)\int\limits_0^1g_\varepsilon^2(t)(1-t)dt - \int\limits_0^1g_\varepsilon'^2(t)(1-t)dt.
\end{equation*}
Obviously, $g_\varepsilon\in C^1[0,1]$, $g_\varepsilon(0)=0$, and $g'_\varepsilon\in L_2[0,1]$. Straightforward computations give
$$
g'_\varepsilon(t) = \frac{\varepsilon}{2}t^{\frac{\varepsilon}{2}-1}f_0(t)+t^{\frac{\varepsilon}{2}}f_0'(t).
$$
Using computations in the proof of Theorem \ref{th1}, we have
$$
D = \varepsilon_0\int\limits_0^1\frac{g_\varepsilon^2(t)}{t^2}(1-t)dt - \int\limits_0^1\left(g_\varepsilon'(t)-\frac{f_0'(t)}{f_0(t)}g_\varepsilon(t)\right)^2(1-t)dt= 
$$
$$
\varepsilon_0\int\limits_0^1g_\varepsilon^2(t)(1-t)dt - \int\limits_0^1\left(\frac{\varepsilon}{2}t^{\frac{\varepsilon}{2}-1}f_0(t)\right)^2(1-t)dt.
$$
One can show that 
$$
D = \varepsilon_0\int\limits_a^b  t^{1+\varepsilon} \left(S_{0,0}^1\left(\frac{\kappa}{2}, 2t-1\right)\right)^2  (1-t)dt -\frac{\varepsilon^2}{4} \int\limits_0^1t^{\varepsilon-1} \left(S_{0,0}^1\left(\frac{\kappa}{2}, 2t-1\right)\right)^2  (1-t)dt.
$$
Obviously, $D> 0$ for sufficiently small $\varepsilon$. This completes the proof of Proposition \ref{prop1}.
\end{proof} 

\subsection{The case $n\geq 4$}

In this case our main result is the following theorem.

\begin{theorem}\label{th2}
Let $n\in \mathbb{N}\setminus\{1,2, 3\}$, $f$ be a continuously differentiable  real-valued  function such that  $f(0)=0$ and  $f'^2(t)(1-t)^{n-1} \in L_1(0,1)$. Then the sharp inequality holds
\begin{equation*}
\int\limits_0^1f'^2(t)(1-t)^{n-1}dt\geq\frac{1}{4}\int\limits_0^1\frac{f^2(t)}{t^2}(1-t)^{n-1}dt+\kappa_n^2\int\limits_0^1f^2(t)(1-t)^{n-1}dt.
\end{equation*}
Here $\kappa_n$ is the smallest solution of the equation 
$
g_n(1) = 0,
$
where 
$$
g_n(t)=\sqrt{t}e^{i\kappa_nt}H_C\left(\frac{n-3}{2} + i \kappa_n ,i\kappa_n(4-n),1,3-n,2i\kappa_n,t\right).
$$
\end{theorem}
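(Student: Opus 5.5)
We will follow the scheme of the proof of Theorem \ref{th1}, with the spheroidal function replaced by the Heun-type solution from Lemma \ref{le2}. Put
$$
f_0(t)=g_n(t)=\sqrt{t}\,h(t,\kappa_n),\qquad h(t,\kappa_n)=e^{i\kappa_n t}H_C\left(\frac{n-3}{2}+i\kappa_n,\,i\kappa_n(4-n),\,1,\,3-n,\,2i\kappa_n,\,t\right).
$$
By Lemma \ref{le2}, $h$ is real-valued. By Lemma \ref{le3} and Corollary \ref{cor31}, with $\kappa_n=A_0(n)$ the smallest root of $h(1,A)=0$, the function $h(\cdot,\kappa_n)$ is positive on $(0,1)$ and vanishes at $t=1$. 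Note that $f_0$ is not the bounded solution $y$ of (\ref{diff_eq_H}); that one carries the factor $(1-t)^{2-n}$. Instead, $f_0=\sqrt{t}\,h$ is $w$ from (\ref{the_first_form}), and after the reflection $t\mapsto$ weight $(1-t)^{n-1}$ the relevant Euler--Lagrange equation is
$$
\bigl((1-t)^{n-1}f'\bigr)'+\Bigl(\frac1{4t^2}+\kappa_n^2\Bigr)(1-t)^{n-1}f=0 .
$$
The first step is therefore to check which of $w$ and $y=w(1-t)^{2-n}$ solves this equation, i.e.\ to verify the sign of the $\frac{n-1}{1-t}y'$ term. If necessary we replace $f_0$ by the appropriate one of the two; the zero of $h$ at $1$ is exactly what makes $y$ bounded at $t=1$, in view of Remark \ref{rem1}.

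Next, for admissible $f$ we consider
$$
A(f)=\int_0^1(1-t)^{n-1}\Bigl(f'-\frac{f_0'}{f_0}f\Bigr)^2dt\ge0 .
$$
This is well defined because $f_0>0$ on $(0,1)$. Expanding and integrating the cross term $-\int (1-t)^{n-1}\frac{f_0'}{f_0}\,d(f^2)$ by parts, and then using the ODE for $f_0$, we get
$$
A(f)=\int_0^1 f'^2(1-t)^{n-1}dt-\int_0^1\Bigl(\frac1{4t^2}+\kappa_n^2\Bigr)f^2(1-t)^{n-1}dt-\Bigl[(1-t)^{n-1}\frac{f_0'}{f_0}f^2\Bigr]_{0}^{1}.
$$

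It remains to show that the boundary terms vanish. At $t=0$ we use $f_0'/f_0=\frac1{2t}+O(1)$ together with $f^2(t)/t\to0$, which follows from the Cauchy--Schwarz argument of Theorem \ref{th1}, since the weight is bounded below near $0$. At $t=1$ we use the local behaviour of $f_0$. If $f_0$ is the regular solution with $f_0(1)\ne0$, then $f_0'/f_0$ is bounded and $(1-t)^{n-1}f^2\to0$ along a sequence. This holds because $f'^2(1-t)^{n-1}\in L_1$ gives $|f(t)|^2=O\bigl(\log\frac1{1-t}\bigr)$ for $n=2$, and $O\bigl((1-t)^{2-n}\bigr)$ in general, so $(1-t)^{n-1}f^2=O(1-t)\to0$.

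This step is the main obstacle: determining the precise behaviour of $f_0$ at $1$. If $f_0$ vanishes there (a simple zero of $h$), then $f_0'/f_0\sim -\frac1{1-t}$ and the boundary term is $(1-t)^{n-2}f^2$. This is only $O(1)$ from the estimate above, so we would need the sign of the term rather than its vanishing. We would try first to show that the term enters with a favourable sign, namely $-(1-t)^{n-1}\frac{f_0'}{f_0}f^2\big|_{t\to1}\ge0$. If that fails, we would switch to the bounded solution $y$, for which properties 1--3 of Lemma \ref{cor_prop} give $y'/y$ bounded at $1$.

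Sharpness is handled as in Proposition \ref{prop1}. We take $f_\varepsilon=t^{\varepsilon/2}f_0$ and compute the deficit
$$
\frac{\varepsilon^2}{4}\int_0^1 t^{\varepsilon-1}\frac{f_0^2}{t}(1-t)^{n-1}dt .
$$
Because $f_0^2/t$ is bounded, this is $O(\varepsilon)$. On the other hand, $\int f_\varepsilon^2(1-t)^{n-1}$ stays bounded below, so for any $\varepsilon_0>0$ the ratio beats $\kappa_n^2+\varepsilon_0$ once $\varepsilon$ is small. Finally, Lemma \ref{le4} transfers both the inequality and its sharpness to the ball.
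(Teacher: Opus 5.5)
Your strategy (ground-state substitution, integration by parts, then the $t^{\varepsilon/2}$ perturbation for sharpness) is the paper's. However, the proposal is built on the wrong comparison function, and the check you postpone is exactly where the argument breaks. Equation (\ref{diff_eq_H}) is itself the Euler--Lagrange equation $\bigl((1-t)^{n-1}y'\bigr)'+\bigl(\frac{1}{4t^2}+\kappa_n^2\bigr)(1-t)^{n-1}y=0$. By contrast, $g_n=w=\sqrt t\,h$ solves (\ref{the_first_form}), i.e.\ the equation with weight $(1-t)^{3-n}$. For $f_0=w$ your identity is false. The integration by parts produces the integrand $\frac{w''}{w}-\frac{n-1}{1-t}\frac{w'}{w}=-\bigl(\frac1{4t^2}+\kappa_n^2\bigr)-\frac{2(n-2)}{1-t}\frac{w'}{w}$. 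The extra term is negative wherever $w'<0$, in particular near $t=1$, so it cannot be controlled.

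Your endpoint analysis for $w$ is also wrong on two counts. First, $h(1)=0$ means $C_1=0$ in Remark \ref{rem1}, so $h\sim C_2(1-t)^{n-2}$ has a zero of order $n-2\ge 2$, not a simple one. Second, write $J(f)$ for the difference of the two sides of the inequality. Your identity then reads $J(f)=A(f)+\lim_{t\to1}(1-t)^{n-1}\frac{f_0'}{f_0}f^2$, so you need this limit to be $\ge 0$. That is the opposite of the sign you call favourable. For $w$, which decreases to $0$ at $t=1$, the limit is $\le 0$, so that branch cannot be rescued.

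The remedy is to take $f_0=f_n=g_n(t)/(1-t)^{n-2}$ from the start, as the paper does; the \emph{main obstacle} then disappears. The Frobenius exponents of (\ref{diff_eq_H}) at $t=1$ are $0$ and $2-n$. Since $C_1=0$, $f_n=\sqrt t\,h/(1-t)^{n-2}$ is the analytic exponent-$0$ solution, with $f_n(1)\neq 0$, and $f_n>0$ on $(0,1)$ because $h>0$ there. Hence $f_n'/f_n$ is bounded near $1$; this follows from the local Frobenius structure, not from Lemma \ref{cor_prop}. Your estimate $(1-t)^{n-1}f^2=O(1-t)$ then kills the boundary term at $1$, and your argument at $0$ goes through unchanged.

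With this choice your proof coincides with the paper's proof of Theorem \ref{th2}. Your sharpness computation is the argument of Proposition \ref{prop2}, stated more cleanly. The deficit $\frac{\varepsilon^2}{4}\int_0^1 t^{\varepsilon-2}f_n^2(1-t)^{n-1}\,dt$ is $O(\varepsilon)$ because $f_n^2/t$ is bounded on $(0,1)$. Meanwhile $\int_0^1 f_\varepsilon^2(1-t)^{n-1}\,dt\to\int_0^1 f_n^2(1-t)^{n-1}\,dt>0$.
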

\begin{proof}

For a given $f$, satisfying the conditions, we consider the function $f_n$ defined by 
$$
f_n(t)=\frac{\sqrt{t}e^{iAt}}{(1-t)^{n-2}}H_C\left(\frac{n-3}{2} + i A ,iA(4-n),1,3-n,2iA,t\right)
$$
and the integral
$$
I(f,n) = \int\limits_0^1(1-t)^{n-1}\left(f'(t)-\frac{f_n'(t)}{f_n(t)}f(t)\right)^2dt.
$$
From Corollary \ref{cor31}   follows that the integral is well defined on $[0,1]$. Since $f_n(t) = \sqrt{t}(1+O(t))$ as $t\to 0$ then $f'_n\not\in L_2[0,1]$ and $I(f,n)>0$ (see Fig. \ref{sketch}). Straightforward computations using integration by parts give
$$
I(f,n) = \int\limits_0^1(1-t)^{n-1}f'^2(t)dt -\int\limits_0^1\frac{f_n'(t)}{f_n(t)} (1-t)^{n-1} df^2(t)+\int\limits_0^1\left(1-t\right)^{n-1}\frac{f_n'^2(t)}{f_n^2(t)}f^2(t)dt=
$$
$$
\int\limits_0^1(1-t)^{n-1}f'^2(t)dt + \int\limits_0^1f^2(t)\left(\frac{f_n''(t)}{f_n(t)}-\frac{n-1}{1-t}\frac{f'_n(t)}{f_n(t)}\right)(1-t)^{n-1}dt-
$$
$$
\lim_{t\to 1-}\frac{f'_n(t)}{f_n(t)} (1-t)^{n-1} f^2(t)+\lim_{t\to 0+}\frac{f'_n(t)}{f_n(t)} (1-t)^{n-1} f^2(t).
$$
Using (in addition see Fig. \ref{sketch})
$$ 
f^2(t)\to 0,\quad\text{as}\quad t\to 0, \quad\text{and}\quad \frac{f'_n(t)}{f_n(t)} \sim \frac{1}{t}  
$$
for small $t$,  we have 
$$\lim_{t\to 1-}\frac{f'_n(t)}{f_n(t)} (1-t)^{n-1} f^2(t)= 0,\quad \lim_{t\to 0+}\frac{f'_n(t)}{f_n(t)} (1-t)^{n-1} f^2(t)=0.
$$

\begin{figure}[h]
\begin{minipage}[h]{0.45\linewidth}
\center{\includegraphics[width=1\linewidth]{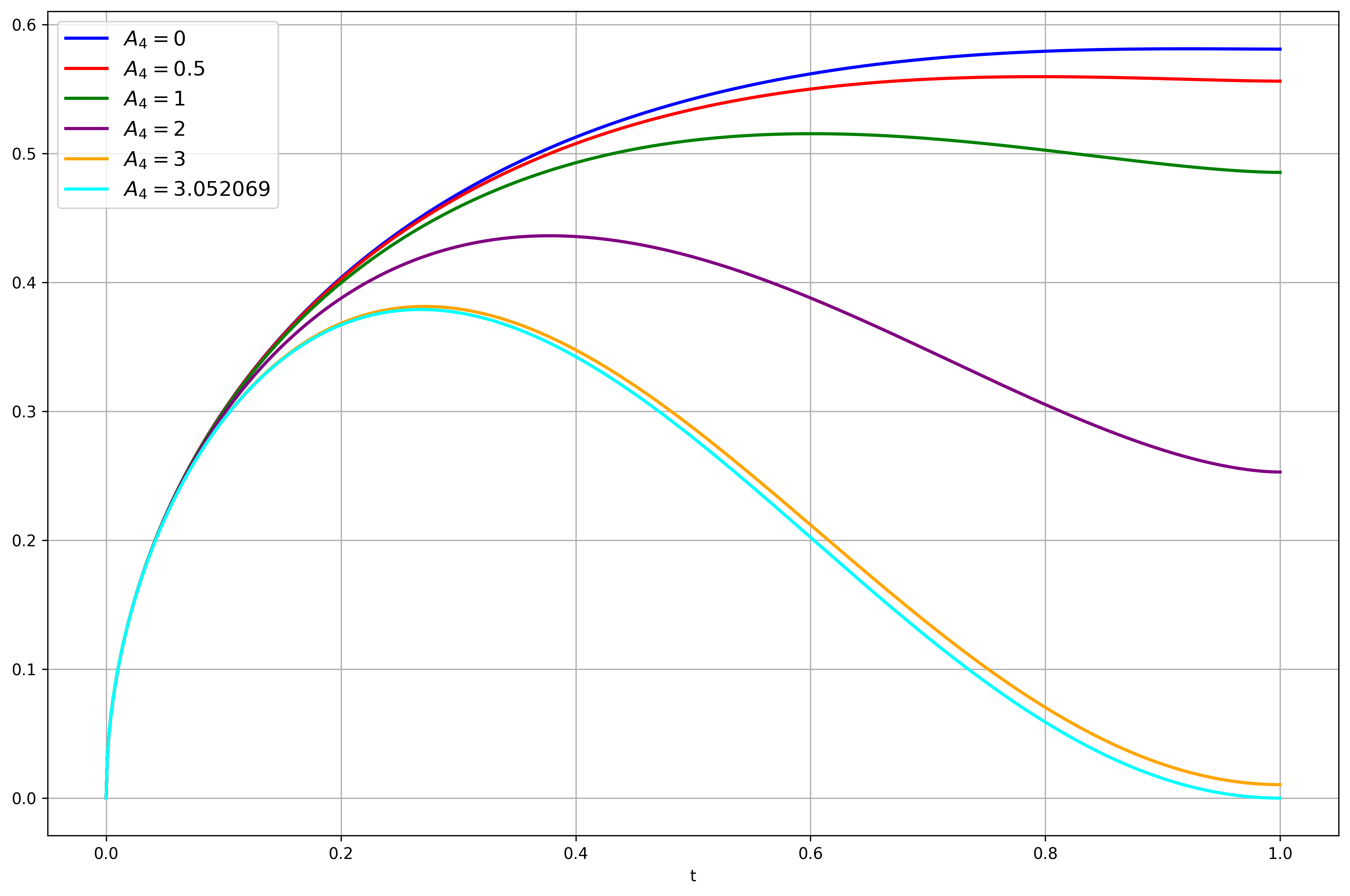}}  \\ a)
\end{minipage}
\hfill
\begin{minipage}[h]{0.45\linewidth}
\center{\includegraphics[width=1\linewidth]{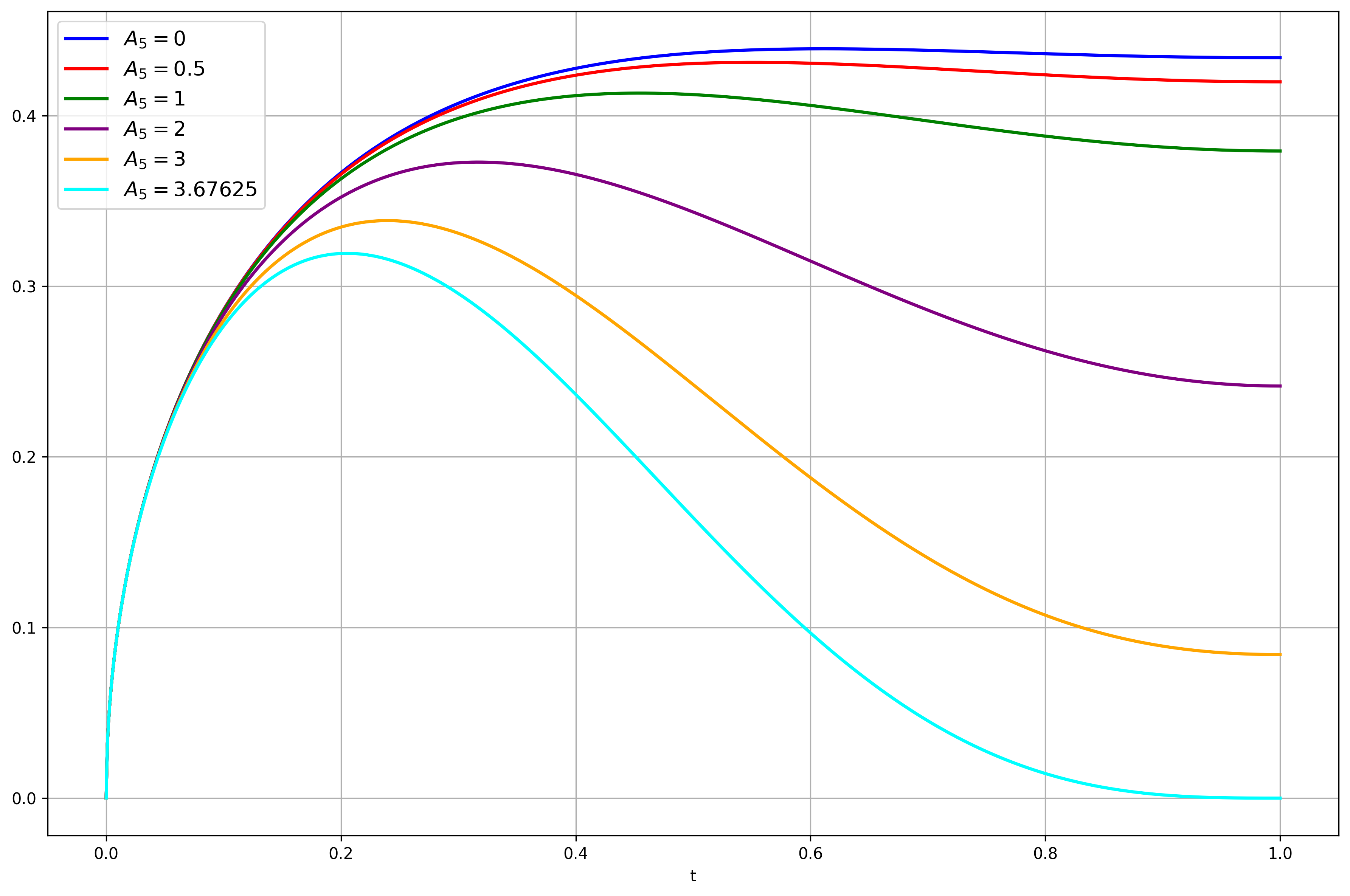}} b) \\
\end{minipage}
\vfill
\begin{minipage}[h]{0.45\linewidth}
\center{\includegraphics[width=1\linewidth]{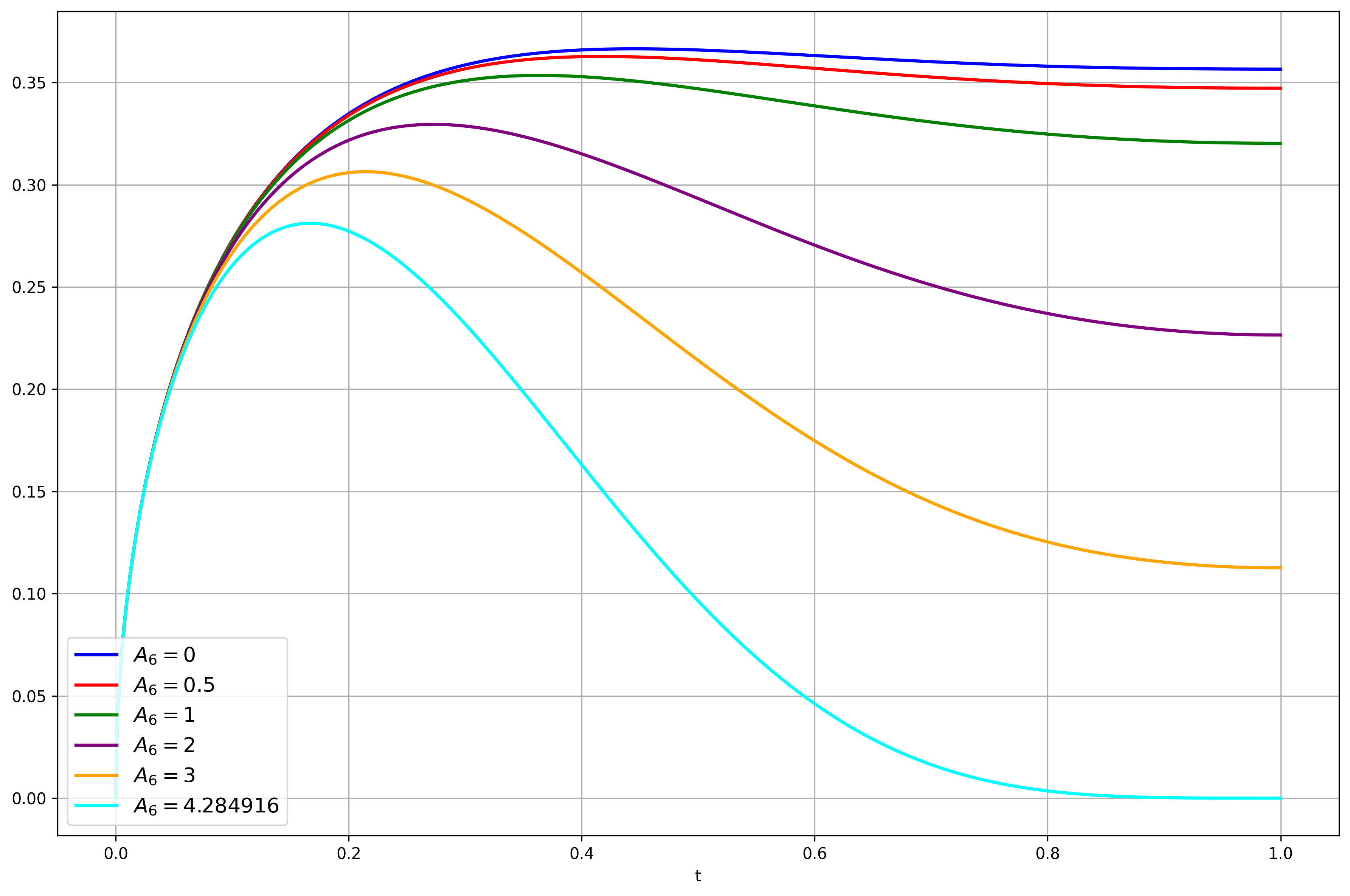}} c) \\
\end{minipage}
\hfill
\begin{minipage}[h]{0.45\linewidth}
\center{\includegraphics[width=1\linewidth]{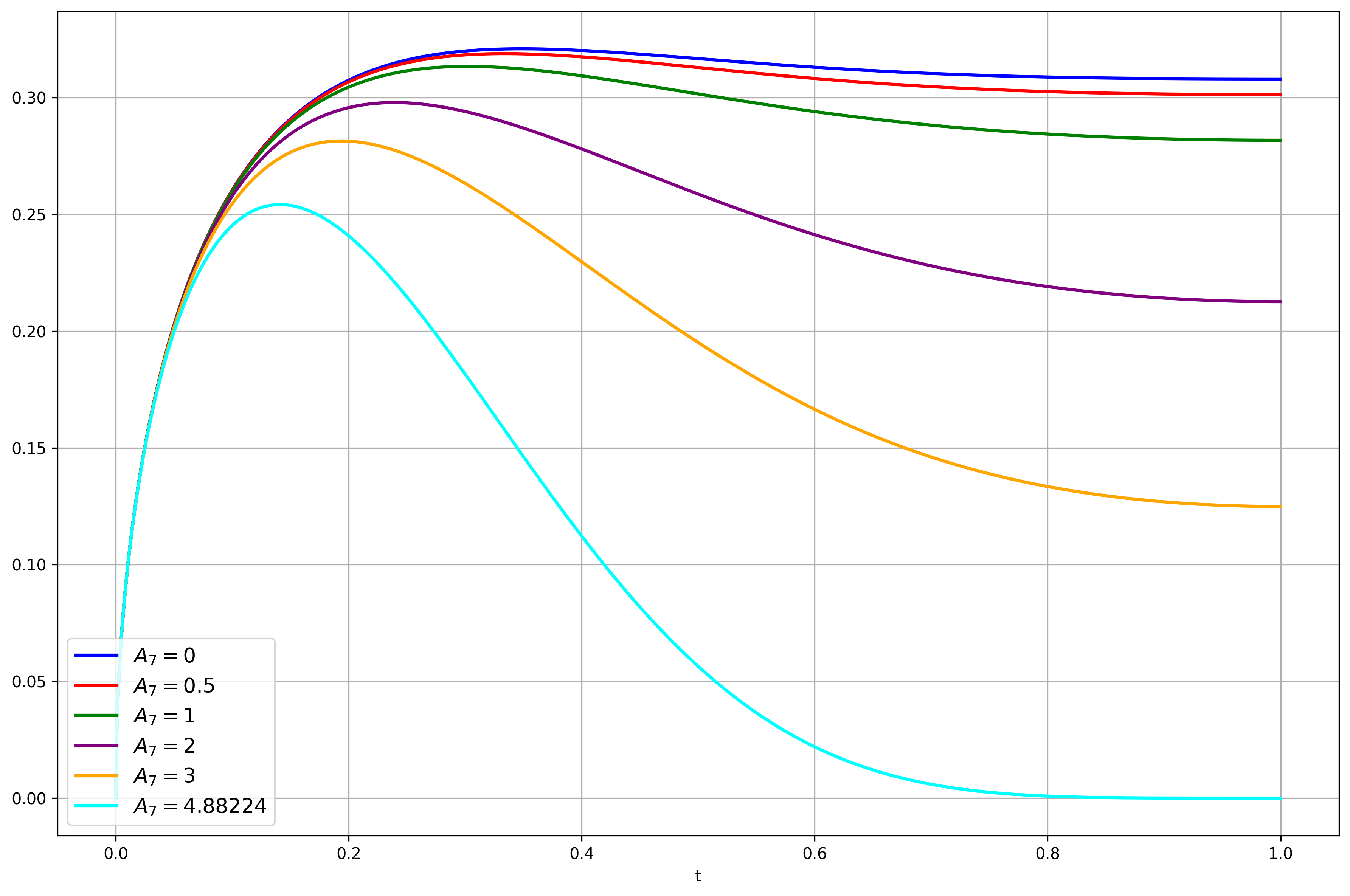}} d) \\
\end{minipage}
\caption{Graphs of $g_n(t,A)$: a) $n=4$, b) $n=5$, c) $n=6$, d) $n=7$.}
\label{sketch}
\end{figure}

Applying Lemma \ref{le2} we get 
$$
\frac{f_0''(t)}{f_0(t)}-\frac{n-1}{1-t}\frac{f'_0(t)}{f_0(t)} +\frac{1}{4t^2}+\kappa_n^2 =0.
$$
Hence
$$
\int\limits_0^1f'^2(t) (1-t)^{n-1} dt -\int\limits_0^1f^2(t)\left(\frac{1}{4t^2}+\kappa_n^2 \right)(1-t)^{n-1}dt>0.
$$
This completes the proof of Theorem \ref{th2}.
\end{proof}

In the next statement we prove that the constant $\kappa_n$ is sharp.

\begin{proposition}\label{prop2} For any $\varepsilon_0 >0$ there exists a function $g\in C^1[0,1]$ such that $g(0)  =0$ and $g'\in L^2[0,1]$, and 
\begin{equation*}
\int\limits_0^1g'^2(t)(1-t)^{n-1}dt\geq\frac{1}{4}\int\limits_0^1\frac{g^2(t)}{t^2}(1-t)dt+(\kappa_n^2+\varepsilon_0)\int\limits_0^1g^2(t)(1-t)^{n-1}dt.
\end{equation*}
\end{proposition}
\begin{proof} Consider the function  $g_\varepsilon$ defined by
$$
g_\varepsilon(t) =  \frac{t^{\frac{1+\varepsilon}{2}} e^{iAt}}{(1-t)^{n-2}}H_C\left(\frac{n-3}{2} + i A ,iA(4-n),1,3-n,2iA,t\right)= t^{\frac{\varepsilon}{2}}f_n(t)
$$
and the following difference
\begin{equation*}
I(f,n) = \frac{1}{4}\int\limits_0^1\frac{g_\varepsilon^2(t)}{t^2}(1-t)^{n-1}dt+(\kappa_n^2+\varepsilon)\int\limits_0^1g_\varepsilon^2(t)(1-t)^{n-1}dt - \int\limits_0^1g_\varepsilon'^2(t)(1-t)^{n-1}dt.
\end{equation*}
Obviously, $g_{\varepsilon}\in C^1[0,1]$, $g_{\varepsilon}(0)=0$, and $g'_\varepsilon\in L_2[0,1]$. Straightforward computations give
$$
{g'}_{\varepsilon}(t) = \frac{\varepsilon}{2}t^{\frac{\varepsilon}{2}-1}f_n(t)+t^{\frac{\varepsilon}{2}}{f'}_n(t).
$$
Computations in the proof of Theorem \ref{th2} imply 
$$
I(f,n) = \varepsilon_0\int\limits_0^1\frac{g_\varepsilon^2(t)}{t^2}(1-t)^{n-1}dt - \int\limits_0^1\left(g_\varepsilon'(t)-\frac{f_n'(t)}{f_n(t)}g_\varepsilon(t)\right)^2(1-t)^{n-1}dt= 
$$
$$
\varepsilon_0\int\limits_0^1g_\varepsilon^2(t)(1-t)^{n-1}dt - \int\limits_0^1\left(\frac{\varepsilon}{2}t^{\frac{\varepsilon}{2}-1}f_n(t)\right)^2(1-t)^{n-1}dt.
$$
One can show that 
$$
I(f,n) = \varepsilon_0\int\limits_a^b  t^{1+\varepsilon} \left(e^{2iAt}H_C^2\left(\frac{n-3}{2} + i A ,iA(4-n),1,3-n,2iA,t\right)\right)  (1-t)^{3-n}dt -
$$
$$
\frac{\varepsilon^2}{4} \int\limits_0^1t^{\varepsilon-1} \left(e^{2iAt}H_C^2\left(\frac{n-3}{2} + i A ,iA(4-n),1,3-n,2iA,t\right)\right)  (1-t)^{3-n}dt.
$$
In the neighbourhood of the point $1$, the confluent Heun function decomposes into a linear combination of two independent local solutions:
$$
H_C(z) = C_1 \cdot y_1(z) + C_2 \cdot y_2(z).
$$
We choose $A$ such that $H_C(1) = C_1 \cdot y_1(1) = 0$ (See Remark \ref{rem1}). Consequently, 
$$
\left(e^{2iAt}H_C^2\left(\frac{n-3}{2} + i A ,iA(4-n),1,3-n,2iA,t\right)\right)  (1-t)^{3-n} \sim (1-t)^{n-1} 
$$
for  $t$ closed to $1$. Therefore, $I(f,n) > 0$ for sufficiently small $\varepsilon$. 

This completes the proof of Proposition \ref{prop2}.
\end{proof} 

\section*{Acknowledgement}
The research is supported by a grant of Russian Science Foundation (project no. 23-11-00066-P,  https://rscf.ru/en/project/23-11-00066-П)

\section*{Declarations}
The authors report there are no competing interests to declare. 
\newpage
\section*{Appendix A:  Python code for sharp constants in the case $n=2$}
\addcontentsline{toc}{section}{Appendix A: Python code for sharp constants in the case $n=2$}

Using the following Python code, we compute $\kappa$ from Lemma  \ref{kappa_comp}. We used Property 7.

\bigskip
\bigskip

\begin{lstlisting}[caption= Computation of $\kappa$]
import numpy as np
from scipy.optimize import root_scalar

def continued_fraction(x, N):
    x = np.float128(x)
    term = np.float128(0.0)
    for k in range(N, 0, -1):
        a = np.float128((2*k - 1) * (2*k)) * x
        b = x - np.float128(0.5) - np.float128((2*k) * (2*k + 1))
        if k == N:
            term = a / b
        else:
            term = a / (b + term)
    return term

def equation(x, N):
    return np.float128(x - 0.5 + continued_fraction(x, N))

prev_root = None
eps = 1e-12
for N in range(1, 50):
    sol = root_scalar(
        lambda x: equation(np.float128(x), N),
        method = 'brentq',
        bracket=[np.float128(0.3), np.float128(1)])
   root = np.float128(sol.root)
    if prev_root is not None and abs(root - prev_root) < eps:
      print(f'Method converged at depth {N}, 0.5*kappa = %{np.sqrt(root)}')
      break
    prev_root = root
\end{lstlisting}

\begin{tcolorbox}[colback=white, colframe=gray!50]
Method converged at depth 8 and $0.5\kappa$ = 0.8591739836792893
\end{tcolorbox}

\newpage 
\section*{Appendix B: Python code for sharp constants in the case $n\geq 4$}
\addcontentsline{toc}{section}{Appendix B: Python code for sharp constants in the case $n\geq 4$}

Using  Python code bellow, we compute $\kappa_n$ from Theorem \ref{th2}. We used statements from Sections \ref{sec3.3} and \ref{sec3.4}.   In Python cod we use high precision ($40$ digits) to calculate constants, and Table ~\ref{tab:my_label8} from Appendix C shows the constants calculated with this parameter.  For larger $n$, we recommend increasing this parameter to improve the stability of our algorithm.

\bigskip
\bigskip

\begin{lstlisting}[caption= Computation of $\kappa_n$]
import numpy as np
from scipy.optimize import brentq
from mpmath import besseljzero
from mpmath import mp
mp.dps = 40

def h_finit_sum_mpmath(A, n, num_terms = 300):
    A_mp = mp.mpf(A)
    a = [mp.mpf(0)] * num_terms
    a[0] = mp.mpf(1)
    if num_terms > 1:
        a[1] = - mp.mpf(n - 3) / 2 * a[0]
    for k in range(1, num_terms - 1):
        term_ak = mp.mpf(k**2 - (n - 3) * k - (n - 3) / 2) * a[k]
        term_ak_minus_1 = - (A_mp**2) * a[k - 1]
        term_ak_minus_2 = (A_mp**2) * a[k - 2] if k >= 2 else mp.mpf(0)
        a[k + 1] = (term_ak + term_ak_minus_1 + term_ak_minus_2) / mp.mpf((k + 1)**2)
    return float(sum(a))

def find_A(n, num_terms = 300):
    j_nu = besseljzero(n/2-1,1)
    A2_upper = float(j_nu**2)-1/4
    if n==4:
      C_GN = 8
    else:
      beta_n = (-9+np.sqrt(12*n**2 - 48*n + 9))/(2*(n**2-4*n-6))
      C_GN = ((n**2-4*n-6)*beta_n + 6)/(4*beta_n*(1-beta_n)**2)
    a_lower = np.sqrt(C_GN)
    a_upper = np.sqrt(A2_upper)
    A_root = brentq(h_finit_sum, a_lower, a_upper, args=(n, num_terms), xtol=1e-12)
    return A_root

\end{lstlisting}

\newpage
\section*{Appendix C: Numerical values of the sharp constants}
\addcontentsline{toc}{section}{Appendix C: Numerical values of the sharp constants}
As a result of Python code  from Appendix A and  Appendix B (see also Fig. \ref{Fig2_Appen} of Appendix D) we have the results from Table \ref{tab:my_label8}.

\bigskip
\bigskip

\begin{table}[h]
    \centering
    \begin{tabular}{cccc}
     \hline
      Dimension $n$   & $\kappa_n$ &   Dimension $n$   & $\kappa_n$ 
      \\  \hline
      $2$     & $1.718347967357\ldots$ &  $32$&  $18.6853160705\ldots$ \\
      $4$     & $3.05206867992\ldots$  &  $33$&  $19.2196028463\ldots $ \\
      $5$     & $3.67625825065\ldots$  &  $34$&  $19.7532233909\ldots$ \\
      $6$     & $4.28491619153\ldots$  &  $35$&  $20.2862095607\ldots$ \\
      $7$     & $4.88224515187\ldots$  &  $36$&  $20.8185908289\ldots$\\
      $8$     & $ 5.4708663623\ldots$  &  $37$&  $21.3503945247\ldots$ \\
      $9$     & $6.05254051442\ldots$  &  $38$&  $21.8816460430\ldots$ \\
      $10$    & $6.62851541454\ldots$  &  $39$&  $22.4123690283\ldots$\\
      $11$    & $7.19971211079\ldots$  &  $40$&  $22.9425855370\ldots$\\
      $12$    & $7.76683255314\ldots$  &  $41$&  $23.4723161808\ldots$\\
      $13$    & $8.33042570624\ldots$  &  $42$&   $24.0015802541\ldots$\\
      $14$    & $8.89093013024\ldots$  &  $43$&  $24.5303958475\ldots$\\
      $15$    & $9.44870250429\ldots$  &  $44$&  $25.0587799492\ldots$\\
      $16$    & $10.0040373609\ldots$  &  $45$&  $25.5867485352\ldots$\\
      $17$    & $10.5571811252\ldots$  &  $46$&  $26.1143166509\ldots$\\
      $18$    & $11.1083423404\ldots$  &  $47$&  $26.6414984843\ldots$\\
      $19$    & $11.6576992721\ldots$  &  $48$&  $27.1683074318\ldots$\\
      $20$    & $12.2054056654\ldots$  &  $49$&  $27.6947561580\ldots$\\
      $21$    & $12.7515951734\ldots$  &  $50$&  $28.2208566497\ldots$\\
      $22$    & $13.2963848134\ldots$  &  $51$&  $28.7466202650\ldots$\\
      $23$    & $13.8398776986\ldots $ &  $52$&  $29.2720577777\ldots$\\
      $24$    & $14.3821652212\ldots $ &  $53$&  $29.7971794182\ldots$\\
      $25$    & $14.9233288167\ldots $ &  $54$&  $30.3219949103\ldots$\\
      $26$    & $15.4634414028\ldots $ &  $55$& $30.8465135056\ldots$\\
      $27$    & $16.0025685622\ldots $ &  $56$&  $31.3707440141\ldots$\\
      $28$    & $16.5407695245\ldots $&   $57$&  $31.8946948329\ldots$\\
      $29$    & $17.0780979856\ldots $&   $58$&  $32.4183739728\ldots$\\
      $30$    & $17.6146027971\ldots $&   $59$&  $32.9417890816\ldots$\\
      $31$    & $18.1503285499\ldots $&   $60$&  $33.4649474672\ldots$\\
       \hline
    \end{tabular}
    \caption{The sharp constant $\kappa_n$ for different $n= 2, 4, 5,\ldots, 60$}
    \label{tab:my_label8}
\end{table}

\newpage 
\section*{Appendix D: Graphs of $h(1,A)$ for different $n$}
\addcontentsline{toc}{section}{Appendix D: Graphs of $h(1,A)$ for different $n$}

\begin{figure}[h]
\begin{center}
\includegraphics[scale=0.6]{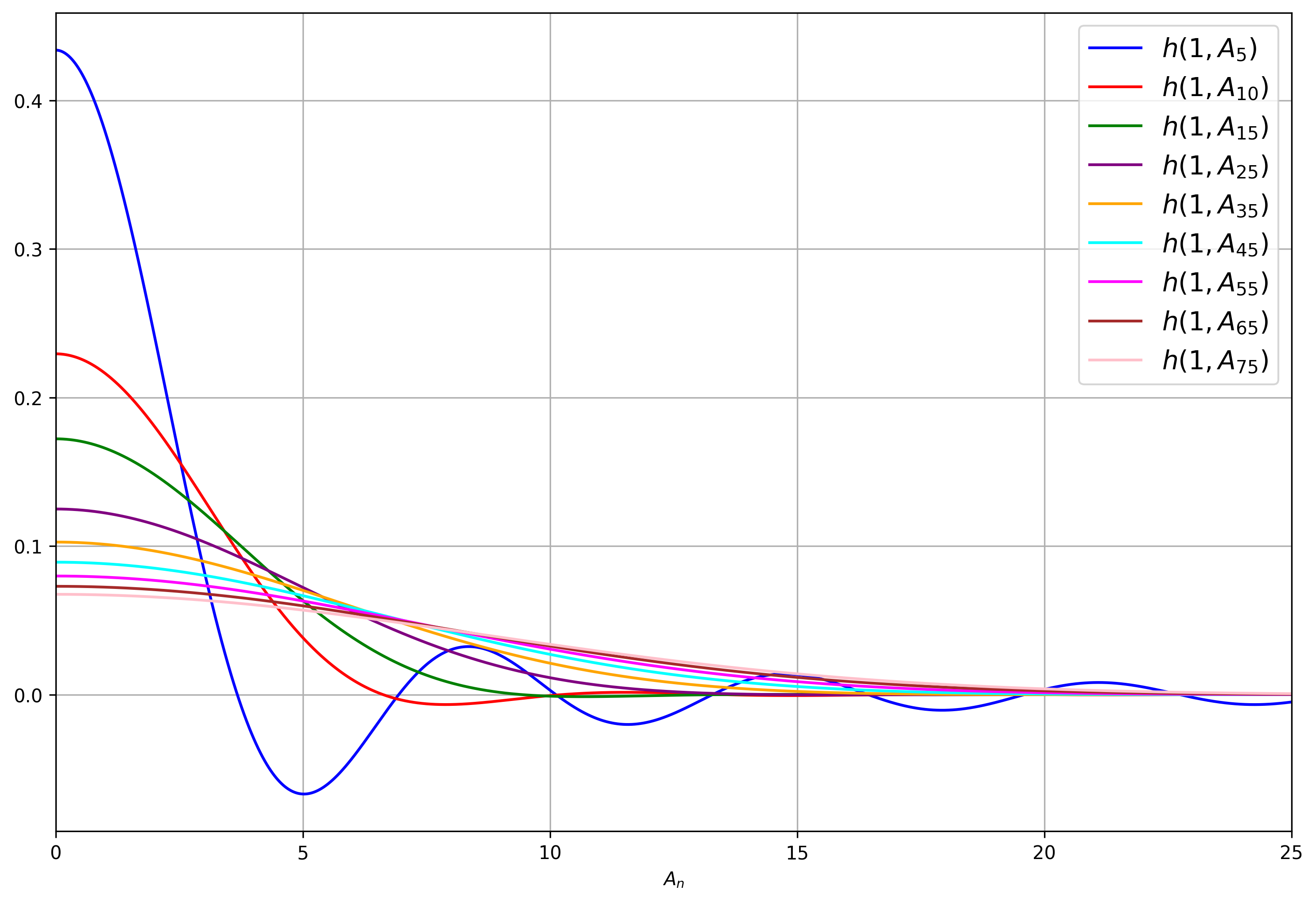}
\caption{Graphs of $h(1,A)$ for different $n$ }
\label{Fig2_Appen}
\end{center}
\end{figure}

\end{document}